\newtheorem{remark}{Remark}[section]
\newtheorem{algorithm}{Algorithm}
\title{A Kaczmarz Method with Simple Random Sampling for Solving Large Linear Systems}
\author{Yutong Jiang\thanks{School of Mathematics, China University of Mining and Technology, Xuzhou, 221116,  Jiangsu, P.R. China. E-mail: {\tt jiangyutong@qq.com}.}
       \and Gang Wu\thanks{Corresponding author. School of Mathematics,
China University of Mining and Technology, Xuzhou, 221116, Jiangsu, P.R. China.
E-mail: {\tt gangwu@cumt.edu.cn} and {\tt gangwu76@126.com}. This author is
supported by the Fundamental Research Funds for the Central Universities under grant 2019XKQYMS89.} \and Long Jiang\thanks{School of Mathematics, China University of Mining and Technology, Xuzhou, 221116,  Jiangsu, P.R. China. E-mail: {\tt jianglong365@hotmail.com}.}
}
\begin{document}
\maketitle

\begin{abstract}
The Kaczmarz method is a popular iterative scheme for solving large, consistent system of over-determined
linear equations. This method has been widely
used in many areas such as reconstruction of CT scanned images, computed tomography and signal processing.
In the Kaczmarz method, one cycles
through the rows of the linear system and each iteration is formed by projecting the
current point to the hyperplane formed by the active row. However, the Kaczmarz method may converge very slowly in practice.
The randomized Kaczmarz method (RK) greatly improves the convergence rate of the Kaczmarz method, by using the
rows of the coefficient matrix in random order rather than in their given order.
An obvious disadvantage of the randomized Kaczmarz method is its probability
criterion for selecting the active or working rows in the coefficient matrix.
In [{\sc Z.Z. Bai, W. Wu}, {\em On greedy randomized Kaczmarz method for solving large sparse linear systems}, SIAM Journal on Scientific Computing, 2018, 40: A592--A606], the authors
proposed a new
probability criterion that can capture larger entries of the residual vector of the linear
system as far as possible at each iteration step, and presented a greedy randomized Kaczmarz method (GRK).
However, the greedy Kaczmarz method may suffer from
heavily computational cost when the size of the matrix is large, and the overhead will be prohibitively
large for big data problems.
The contribution of this work is as follows.
First, from the probability significance point of view, we present a partially randomized Kaczmarz method, which can reduce the computational overhead needed in greedy randomized Kaczmarz method.
Second, based on Chebyshev's law of large numbers and Z-test, we apply a simple sampling approach to the partially randomized Kaczmarz method, and propose a randomized Kaczmarz method with simple random sampling for large linear systems. The convergence of the proposed method is established. Third, we apply the new strategy to the ridge regression problem, and propose a partially randomized Kaczmarz method with simple random sampling for ridge regression. Numerical experiments show numerical behavior of the proposed algorithms, and demonstrate their superiority over many state-of-the-art randomized Kaczmarz methods for large linear systems problems and ridge regression problems.

\end{abstract}
\begin{keywords}
Kaczmarz Method, Randomised Kaczmarz Method (RK), Greedy Randomized Kaczmarz Method (GRK), Large Linear Systems, Simple Random Sampling.
\end{keywords}
\begin{AMS}
65F10, 65F15
\end{AMS}

\pagestyle{myheadings} \thispagestyle{plain} \markboth{Y. JIANG, G. WU AND L. JIANG}{\sc A Randomized Kaczmarz Method with Simple Random Sampling}

\date{ }%

\section{Introduction}
\setcounter{equation}{0}

The Kaczmarz method \cite{bibitem12} is an effective algorithm for solving large consistent overdetermined linear system
\begin{equation}\label{1.1}
A{\bf x}={\bf b},
\end{equation}
where $A\in \mathbb{C}^{m\times n}$ and ${\bf b}\in \mathbb{C}^{m}$, with $m\geqslant n$.
This method has been applied to many important fields such as reconstruction of CT scanned images \cite{Had}, biological calculation \cite{bibitem20}, computerized
tomography \cite{Gow, bibitem10, bibitem19}, digital signal processing \cite{bibitem9,bibitem10,bibitem19}, image reconstruction \cite{Car, Li, Ram, Tho}, distributed computing \cite{Hef, bibitem23}; and so on \cite{Loe, Gua, Int, Bou, Nec,Xu}.

For the linear system \eqref{1.1}, the Kaczmarz method cycles
through the rows of the matrix in question, and each iteration is formed by projecting the
current point to the hyperplane formed by the active row. That is,
\begin{equation}\label{2.1}
{\bf x}_{k+1}={\bf x}_k+\frac{{\bf b}_{\left ( ik \right )}-A_{\left ( ik \right )}{\bf x}_k}{\left \| A_{\left ( ik \right )} \right \|_{2}^{2}}\left ( A_{\left ( ik \right )} \right )^{H},\quad ik = (k~mod~m) + 1,
\end{equation}
where ${\bf x}_k$ denotes the approximation obtained from the $k$-th iteration, $A_{(ik)}$ denotes the $i$-th row of the matrix $A$ in the $k$-th iteration, and ${\bf b}_{(ik)}$ stands for the $i$-th entry of the vector ${\bf b}$ in the $k$-th iteration, respectively.


Indeed, the Kaczmarz method tries to find the solution by successively projecting the current iteration solution ${\bf x}_{k}$ onto the hyperplane $H_{ik}=\left \{ {\bf x} |\left \langle A_{\left ( ik \right )}, ~{\bf x} \right \rangle={\bf b}_{(ik)} \right \}$.
Although Kaczmarz method has been proposed for a long time and widely used in many practical problems, its convergence rate is difficult to determine.
In \cite{bibitem8}, Strohmer and Vershynin proposed a randomized Kaczmarz (RK) method with expected exponential rate of convergence.
The idea is that using the
rows of the coefficient matrix in random order rather than in their given order,
can improve the convergence of the classical Kaczmarz method.

The randomized
Kaczmarz method is quite appealing for practical applications, and it has been received great attention by many researchers. To name a few,
Hefny {\it et al.} presented variants of the randomized Kaczmarz (RK) and randomized
Gauss-Siedel (RGS) for solving linear system and derive their convergence rates \cite{Hef}. Based on the randomized Kaczmarz method, Zouzias and Freris \cite{Zou} proposed a randomized extended Kaczmarz method for solving least squares problem. A sampling Kaczmarz-Motzkin method was proposed in \cite{Loe}, which is a combination of the Kaczmarz and Motzkin methods. An accelerated randomized Kaczmarz algorithm was presented in \cite{Liu}, and a general randomized sparse Kaczmarz
method for generating sparse approximate solutions to linear systems was proposed in \cite{Lei}. Some block Kaczmarz methods were investigated in \cite{Nec,bibitem6,bibitem7}. The convergence rate of the randomized Kaczmarz method was considered in \cite{bibitem1}.
One is recommended to see \cite{Du,Ma,bibitem5,Tho,bibitem11} and the references therein.


As was pointed out in \cite{bibitem3}, one weakness of the randomized Kaczmarz method is its probability
criterion for selecting the working rows in the coefficient matrix.
In order to deal with this problem,
Bai and Wu \cite{bibitem3} introduced
an effective probability criterion for selecting the working rows from the coefficient matrix, and construct a greedy randomized Kaczmarz method (GRK).
The GRK method takes into account the residuals during iterations as the probability selection criteria, and extracts the rows corresponding to large residuals.
Recently, some relaxed greedy randomized Kaczmarz methods were proposed in \cite{bibitem2,bibitem18}, and the strategy was generalized to solving the ridge regression problem \cite{bibitem14}.

However, both the greedy Kaczmarz method and their relaxed version may suffer from heavily computational cost. Indeed, in order to construct an index set in each step, one has to scan the residual vector from scratch during each iteration. This is unfavorable when the size of the matrix is large, especially for big data problems. Moreover, in the relaxed greedy randomized Kaczmarz method, it is required to choose relaxation parameters in advance, whose optimal value is difficult to determine in advance.

So as to overcome these difficulties, we try to propose new  probability
criterion for selecting the working rows in the randomized Kaczmarz method, and to improve the performance of the greedy Kaczmarz method.
From the probability significance point of view, we first present a partially randomized Kaczmarz method. In this method, we only need to seek the row with the largest (relative) residual in all the rows of the coefficient matrix, and there is no need to construct index sets as in the greedy Kaczmarz method. Thus, the partially randomized Kaczmarz method is (much) cheaper than the greedy Kaczmarz method. However, the partially randomized Kaczmarz method may still be time consuming for big data problems.
Thus, based on Chebyshev's law of large numbers and Z-test, we then apply a simple sampling approach to the partially randomized Kaczmarz method, and propose a randomized Kaczmarz method with simple random sampling for large linear systems. This method can reduce the computational overhead of the partially randomized Kaczmarz method significantly.
The convergence of the proposed methods are analyzed. Furthermore, we apply the new strategies to the ridge regression problem, and propose a partially randomized Kaczmarz method with simple random sampling for ridge regression.


The paper is organized as follows. In section 2, we briefly introduce randomized Kaczmarz method and the greedy randomized Kaczmarz method. In section 3, we propose a partially randomized Kaczmarz method and a a partially randomized Kaczmarz method with simple sampling. Theoretical results are given to show the rationality and feasibility of our proposed algorithms. In section 4, we apply our new strategies to the ridge regression problem. In Section 5, extensive numerical experiments are performed on some real-world problems and synthetic data sets. They demonstrate the numerical behavior of the proposed algorithms, and illustrate the superiority of the new algorithms over many state-of-the-art randomized Kaczmarz methods for large linear systems and ridge regression problems. Some concluding remarks are given in Section 6.



\section{The randomized Kaczmarz method and the greedy randomized Kaczmarz method}
\setcounter{equation}{0}

In stead of using the rows of the matrix $A$ in their given order, the randomized Kaczmarz method makes use of the following  probabilistic criterion for choosing working rows:
\begin{equation}\label{2.2}
pr(ik)=\frac{\left \| A_{(ik)}\right \|_{2}^{2}}{\left \| A \right \|_{F}^{2}},
\end{equation}
which is based on the norm of the row of the matrix $A$.
It was shown that the randomized Kaczmarz method can greatly improve the convergence rate of the Kaczmarz method. The algorithm is listed as follows, for more details, refer to \cite{bibitem8}.
\begin{algorithm}\label{alg1}
{\bf The Randomized Kaczmarz Method~~{(RK)}}~{\rm\cite{bibitem8}}\\
{{\bf Input:} $A$, $\bf b$, $l$ and ${\bf x}_0$;}\\
{{\bf Output:} The approximate solution $\widetilde{\bf x}$;}\\
{\bf 1.} for $k=0, 1, \ldots, l-1$ do\\
{\bf 2.} Select $ik\in\left \{1, 2, \ldots, m  \right \}$ with probability $pr(row=ik)=\frac{\left \| A_{(ik)}
\right \|_{2}^{2}}{\left \| A \right \|_{F}^{2}}$;\\
{\bf 3.} Let ${\bf x}_{k+1}={\bf x}_k+\frac{{\bf b}_{\left ( ik \right )}-A_{\left ( ik \right )}{\bf x}_k}{\left \| A_{\left ( ik \right )} \right \|_{2}^{2}}\left ( A_{\left ( ik \right )} \right )^{H}$. If ${\bf x}_{k+1}$ is accurate enough, then stop, else continue;\\
{\bf 4.} endfor
\end{algorithm}

Moreover, Strohmer and Vershynin proved the following result, showing that the randomized Kaczmarz method converges with expected exponential
rate of convergence, and the convergence speed is closely related to the condition number of $A$.
\begin{theorem} \cite{bibitem8}
 Let ${\bf x}$ be the solution of \eqref{1.1}. Then Algorithm \ref{alg1} converges to ${\bf x}$ in expectation, with the average error
\begin{equation}
E \left \| {\bf x}_k-{\bf x} \right \|_{2}^{2}\leq \left ( 1-\kappa^{-2} \left ( A \right ) \right )^{k}\left \| {\bf x}_0-{\bf x} \right \|_{2}^{2},
\end{equation}
where $\kappa \left ( A \right )^{-2}:=\left \| A \right \|_{F}^{-2}\left \| A^{\dag} \right \|_{2}^{-2}$, and $ A^{\dag}$ is the left inverse of $A$.
\end{theorem}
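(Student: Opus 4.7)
The plan is to combine the Pythagorean geometry of the projection step with the specific structure of the probability weights used in Algorithm~\ref{alg1}. First I would observe that, since ${\bf x}$ satisfies $A_{(ik)}{\bf x}={\bf b}_{(ik)}$ for every index, both ${\bf x}_{k+1}$ and ${\bf x}$ lie on the hyperplane $H_{ik}$, while ${\bf x}_{k+1}-{\bf x}_k$ is a scalar multiple of $(A_{(ik)})^H$ and is therefore orthogonal to $H_{ik}$. Applying the Pythagorean identity to the triangle with vertices ${\bf x}_k$, ${\bf x}_{k+1}$, ${\bf x}$, and evaluating $\|{\bf x}_{k+1}-{\bf x}_k\|_2^2$ from the update formula \eqref{2.1} together with the consistency ${\bf b}_{(ik)}=A_{(ik)}{\bf x}$, yields
\begin{equation}
\|{\bf x}_{k+1}-{\bf x}\|_2^2 = \|{\bf x}_k-{\bf x}\|_2^2-\frac{|A_{(ik)}({\bf x}_k-{\bf x})|^2}{\|A_{(ik)}\|_2^2}.
\end{equation}

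Next I would take the conditional expectation with respect to the random index $ik$, drawn with probability $pr(ik)=\|A_{(ik)}\|_2^2/\|A\|_F^2$ as in \eqref{2.2}. The numerator of this weight cancels the denominator of the projected residual exactly, giving the clean identity
\begin{equation}
E\bigl[\|{\bf x}_{k+1}-{\bf x}\|_2^2\,\big|\,{\bf x}_k\bigr]=\|{\bf x}_k-{\bf x}\|_2^2-\frac{\|A({\bf x}_k-{\bf x})\|_2^2}{\|A\|_F^2}.
\end{equation}
This cancellation is precisely what motivates the Strohmer--Vershynin choice of weights, and is the cleanest step in the argument.

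The remaining and main obstacle is to lower-bound $\|A({\bf x}_k-{\bf x})\|_2^2$ by $\|A^{\dag}\|_2^{-2}\|{\bf x}_k-{\bf x}\|_2^2$, since this inequality is only valid on the row space $\mathrm{range}(A^H)$. I would justify it by an invariance argument: provided ${\bf x}_0-{\bf x}\in\mathrm{range}(A^H)$, every update in \eqref{2.1} adds a scalar multiple of $(A_{(ik)})^H$, so the error stays in the row space for all $k$, and on this subspace the smallest nonzero singular value of $A$, namely $\|A^{\dag}\|_2^{-1}$, controls $\|A\cdot\|_2$ from below. Plugging this bound in yields the one-step contraction
\begin{equation}
E\bigl[\|{\bf x}_{k+1}-{\bf x}\|_2^2\,\big|\,{\bf x}_k\bigr]\leq \bigl(1-\kappa(A)^{-2}\bigr)\|{\bf x}_k-{\bf x}\|_2^2.
\end{equation}

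Finally I would take the full expectation using the tower property and iterate the contraction $k$ times to obtain the claimed geometric rate $(1-\kappa(A)^{-2})^k\|{\bf x}_0-{\bf x}\|_2^2$. The delicate point deserving care is the row-space invariance in the third step; once that is in place, the estimate is a short chain of elementary algebraic manipulations.
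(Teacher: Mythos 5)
Your proposal is correct and is essentially the standard Strohmer--Vershynin argument that the paper cites from \cite{bibitem8} rather than reproducing (the same Pythagorean/orthogonality identity appears in the paper's own derivation of \eqref{3.1}). The only remark worth making is that your row-space invariance step is not actually needed here: since the theorem assumes $A^{\dag}$ is a left inverse, $A$ has full column rank, so $\mathrm{range}(A^H)=\mathbb{C}^n$ and the bound $\|Az\|_2\geq\|A^{\dag}\|_2^{-1}\|z\|_2$ holds for every $z$ without any hypothesis on ${\bf x}_0$.
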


Indeed, the randomized Kaczmarz method
is convergent in expectation to the unique
least-norm solution of the linear system \eqref{1.1}, when the coefficient matrix $A$ is of full column rank with $m\geq n$ or is of full row rank with
$m\leq n$ \cite{bibitem8,Ma}. Specifically, when  the linear system \eqref{1.1} is consistent, it was shown that the iteration sequence $\{{\bf x}_k\}$ converges to the unique least-norm solution $A^{\dag}{\bf b}$ \cite{Gow}.



An obvious disadvantage of the randomized Kaczmarz method is its probability
criterion for selecting the active or working rows in the coefficient matrix. For instance, in the unitary matrix, all the row norms of the matrix are the same, and one has to choose the working rows arbitrarily.
Another example is the coefficient matrix of the form $A={\rm diag}(1,10^8)$. In this case, the probability of choosing the first row is almost zero, as the norm of the second row is much larger than that of the first one.


 More precisely, in the $k$-th iteration, let the residual vector be ${\bf r}_{k}={\bf b}-A{\bf x}_{k}$, if $|{\bf r}_{\left(ik\right)}|>|{\bf r}_{\left(jk\right)}|,i,j\in\{1,2,\ldots,m\}$, then the probability of choosing the $i$-th row as the working row in the $(k+1)$-th iteration will be larger than that of the $j$-th row, where ${\bf r}_{\left(ik\right)}$ and ${\bf r}_{\left(jk\right)}$ represent the $i$-th and the $j$-th elements of ${\bf r}_{k}$, respectively. The randomized Kaczmarz method is described as follows, for more details and its implementations, refer to \cite{bibitem3}:
\begin{algorithm}\label{alg2}
{\bf The Greedy Randomized Kaczmarz Method {(GRK)}}~{\rm\cite{bibitem3}}\\
{{\bf Input:} $A$, $\bf b$, $l$ and $\bf{x_0}$;} \\
{{\bf Output:} The approximate solution $\widetilde{\bf x}$;}\\
{\bf 1.} for $k=0, 1,\ldots, l-1$ do\\
{\bf 2.} Compute
\begin{equation}\label{2.4}
\epsilon_k=\frac{1}{2}\left ( \frac{1}{\left \| {\bf b}-A{\bf x}_k \right \|_{2}^{2}} \max_{1\leq i\leq m}\left \{ \frac{\left | {{\bf b}_{\left ( ik \right )}-A_{\left ( ik \right )}{\bf x}_{k}} \right |^{2}}{\left \| A_{\left ( ik \right )} \right \|_{2}^{2}}\right \}+\frac{1}{\left \| A \right \|_{F}^{2}}\right )
\end{equation}\\
{\bf 3.} Determine the index set of positive integers
\begin{equation}\label{2.5}
\upsilon _k=\left \{ ik\Big| | {{\bf b}_{\left ( ik \right )}-A_{\left ( ik \right )}{\bf x}_{k}} |^{2}\geqslant \epsilon _k\left \| b-A{\bf x}_k \right \|_{2}^{2}\left \| A_{\left ( ik \right )} \right \|_{2}^{2} \right \}
\end{equation}\\
{\bf 4.} Compute the $i$-th entry $\widetilde{\bf r}_{\left ( ik \right )}$ of the vector $\widetilde{\bf r}_{k}$ according to
$$
\widetilde{\bf r}_{\left ( ik \right )}=\begin{cases}
{\bf b}_{\left ( i \right )}-A_{\left ( i \right )}{\bf x}_{k}, & \text{ if } i\in\upsilon _k \\
0, & \text{ otherwise }
\end{cases}
$$\\
{\bf 5.} Select $ik\in\upsilon _k$ with probability:
\begin{equation}\label{2.6}
pr(ik)=\frac{\widetilde{\bf r}_{(ik)}^{2}}{\left \| \widetilde{\bf r}_{k} \right \|_{2}^{2}}
\end{equation}
\\
{\bf 6.} Let ${{\bf x}}_{k+1}={{\bf x}}_k+\frac{{{\bf b}}_{\left ( ik \right )}-A_{\left ( ik \right )}{{\bf x}}_k}{\left \| A_{\left ( ik \right )} \right \|_{2}^{2}}\left ( A_{\left ( ik \right )} \right )^{H}$. If ${\bf x}_{k+1}$ is accurate enough, then stop, else continue;\\
{\bf 7.} endfor
\end{algorithm}

The convergence property of the greedy randomized Kaczmarz method was established in \cite{bibitem3}, and the main result is given as follows:
\begin{theorem} {\rm\cite{bibitem3}}
Let ${\bf x}$ be the solution of \eqref{1.1}. Then Algorithm \ref{alg2} converges to ${\bf x}$ in expectation, with
\begin{equation}
\mathbb{E}\left \| {\bf x}_{1}-{\bf x} \right \|_{2}^{2}\leq \left[ 1-\frac{\lambda_{\min}\left( A^{H}A \right)}{\left\| A \right\|_{F}^{2}}\right]\left \| {\bf x}_{0}-{\bf x} \right \|_{2}^{2}, ~~k=0
\end{equation}

and

\begin{equation}
\mathbb{E}_k\left \| {\bf x}_{k+1}-{\bf x} \right \|_{2}^{2}\leq \left[ 1-\frac{1}{2}\Big ( \frac{1}{\gamma }\left \| A \right \|_{F}^{2}+1 \Big)\kappa^{-2} \left ( A \right )\right]\left \| {\bf x}_k-{\bf x} \right \|_{2}^{2},~~k=1,2,\ldots
\end{equation}
where $\gamma =\max_{1\leq i\leq m}\sum_{j=1,j\neq i}^{m}\left \| A_{\left ( j \right )} \right \|_{2}^{2}$.
\end{theorem}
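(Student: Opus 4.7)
The plan is to follow the classical Kaczmarz-style argument in three stages: (i) obtain an exact Pythagorean identity from the orthogonal projection in Step~6, (ii) pass to conditional expectation using the probability rule \eqref{2.6} and the defining inequality of the greedy index set \eqref{2.5}, and (iii) exploit a one-sparse residual structure (for $k\geq 1$) together with the smallest-singular-value estimate to convert the resulting residual bound into the claimed error bound.

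First, since both ${\bf x}_{k+1}$ and the true solution ${\bf x}$ lie in the hyperplane $H_{ik}$, the update in Step~6 of Algorithm~\ref{alg2} decomposes the error orthogonally and gives the exact identity
\begin{equation*}
\|{\bf x}_{k+1}-{\bf x}\|_2^2 = \|{\bf x}_k-{\bf x}\|_2^2 - \frac{|{\bf b}_{(ik)}-A_{(ik)}{\bf x}_k|^2}{\|A_{(ik)}\|_2^2}.
\end{equation*}
Taking conditional expectation with respect to the selection rule \eqref{2.6}, and using that $\widetilde{\bf r}_{(ik)}={\bf b}_{(ik)}-A_{(ik)}{\bf x}_k$ on $\upsilon_k$, I would combine the two factors and apply the defining inequality \eqref{2.5}, namely $|{\bf b}_{(ik)}-A_{(ik)}{\bf x}_k|^2\geq \epsilon_k\|{\bf r}_k\|_2^2\|A_{(ik)}\|_2^2$ for $ik\in\upsilon_k$, to reduce the sum to
\begin{equation*}
\mathbb{E}_k\|{\bf x}_{k+1}-{\bf x}\|_2^2 \leq \|{\bf x}_k-{\bf x}\|_2^2 - \epsilon_k\|{\bf r}_k\|_2^2.
\end{equation*}

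Next, I would lower bound $\epsilon_k\|{\bf r}_k\|_2^2$ directly from \eqref{2.4}, which reduces the task to lower bounding $\max_i |{\bf r}_{k,i}|^2/\|A_{(i)}\|_2^2$. For $k=0$ the elementary inequality $\max_i(a_i/b_i)\geq \sum_i a_i/\sum_i b_i$ (over all $m$ rows) immediately delivers $\|{\bf r}_0\|_2^2/\|A\|_F^2$. For $k\geq 1$ the sharpening comes from the observation that the Kaczmarz projection at step $k-1$ forces the $(i_{k-1})$-th component of ${\bf r}_k$ to vanish; applying the same inequality over the remaining $m-1$ indices then yields the strictly better lower bound $\|{\bf r}_k\|_2^2/\gamma$. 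Substituting these back into \eqref{2.4} gives $\epsilon_k\|{\bf r}_k\|_2^2\geq \tfrac{1}{2}(\|A\|_F^2/\gamma+1)\|{\bf r}_k\|_2^2/\|A\|_F^2$.

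Finally, I would convert residual norm to error norm via $\|{\bf r}_k\|_2^2=\|A({\bf x}_k-{\bf x})\|_2^2\geq \lambda_{\min}(A^HA)\|{\bf x}_k-{\bf x}\|_2^2$ and recognize $\lambda_{\min}(A^HA)/\|A\|_F^2=\kappa^{-2}(A)$ from the definition stated after Theorem~2.1. Assembling everything produces the two claimed bounds. I expect the main obstacle to be recognizing and exploiting the sparsity of the residual at the general step $k\geq 1$: it is exactly this free zero entry, inherited from the previous projection, that permits replacing $\|A\|_F^2$ by the strictly smaller constant $\gamma$ in one of the two summands of $\epsilon_k$, and that thereby yields the sharper contraction factor reported for $k\geq 1$ as compared with $k=0$.
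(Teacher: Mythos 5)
Your proposal is correct. Note that the paper itself gives no proof of this theorem --- it is quoted verbatim from Bai and Wu \cite{bibitem3} --- but your reconstruction is exactly the argument of the original source: the Pythagorean identity from the orthogonal projection, the reduction to $\epsilon_k\|{\bf r}_k\|_2^2$ via the defining inequality of $\upsilon_k$, and the key observation that the entry ${\bf r}_{k,i_{k-1}}$ vanishes for $k\geq 1$, which lets you take the max over $m-1$ indices and replace $\|A\|_F^2$ by $\gamma$ in that term (your phrase ``one-sparse residual structure'' overstates this --- only one entry is forced to zero --- but the argument you actually run uses the correct fact). All steps check out.
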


As $\frac{1}{2}( \frac{1}{\gamma }\| A \|_{F}^{2}+1)>1$, the convergence factor of the greedy randomized Kaczmarz method is smaller than that of the randomized Kaczmarz. Hence, the greedy randomized Kaczmarz method would converge faster than the randomized Kaczmarz method.
On the basis of GRK method, Bai and Wu \cite{bibitem2}
further generalize the greedy randomized Kaczmarz method via introducing a relaxation
parameter in the involved probability criterion, and propose a class of relaxed greedy
randomized Kaczmarz methods (RGRK).
The key is that the greedy factor $\epsilon_k$ used in the RRK method is different from the one used in Algorithm \ref{alg2} \big(refer to \eqref{2.4}\big), and it is chosen as
\begin{equation}\label{288}
\epsilon_k=\frac{\theta}{\left \|{\bf b}-A{\bf x}_k \right \|_{2}^{2}} \max_{1\leq i\leq m}\left \{ \frac{\left | {{\bf b}_{\left ( ik \right )}-A_{\left ( ik \right )}{\bf x}_{k}} \right |^{2}}{\left \| A_{\left ( ik \right )} \right \|_{2}^{2}}\right \}+\frac{1-\theta}{\left \| A \right \|_{F}^{2}},
\end{equation}
where $0\leq\theta\leq 1$ is a user-provided parameter. Obviously, the relaxed greedy
randomized Kaczmarz method reduces to the greedy
randomized Kaczmarz method as $\theta=\frac{1}{2}$.
The main convergence result on the relaxed greedy
randomized Kaczmarz method is listed as follows:
\begin{theorem} {\rm \cite{bibitem2}}\label{Thm2.3}
Let ${\bf x}$ be the solution of \eqref{1.1}. Then the relaxed greedy
randomized Kaczmarz method converges to ${\bf x}$ in expectation, with
\begin{equation}
\mathbb{E}\left \| {\bf x}_{1}-{\bf x} \right \|_{2}^{2}\leq \left[ 1-\frac{\lambda_{\min}\left( A^{H}A \right)}{\left\| A \right\|_{F}^{2}}\right]\left \| {\bf x}_{0}-{\bf x} \right \|_{2}^{2}, ~~k=0
\end{equation}

and

\begin{equation}\label{299}
\mathbb{E}_k\left \| {\bf x}_{k+1}-{\bf x} \right \|_{2}^{2}\leq \left[1-\Big( \frac{\theta}{\gamma }\left \| A \right \|_{F}^{2}+\big(1-\theta\big)\Big)\kappa^{-2} \left ( A \right )\right]\left \| {\bf x}_k-{\bf x} \right \|_{2}^{2}, ~~k=1,2,\ldots
\end{equation}
where $\gamma =\max_{1\leq i\leq m}\sum_{j=1, j\neq i}^{m}\left \| A_{\left ( j \right )} \right \|_{2}^{2}$.
\end{theorem}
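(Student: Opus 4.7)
My plan parallels the proof of Theorem 2.2 for the GRK method, tracking the effect of the relaxation parameter $\theta$. I would begin with the orthogonality of the Kaczmarz update: since ${\bf x}_{k+1} - {\bf x}_k$ is a multiple of $A_{(i_k)}^H$ and $A_{(i_k)}({\bf x}_{k+1} - {\bf x}) = 0$ in the consistent case, one obtains the Pythagorean identity
$$\|{\bf x}_{k+1} - {\bf x}\|_2^2 = \|{\bf x}_k - {\bf x}\|_2^2 - \frac{|{\bf b}_{(i_k)} - A_{(i_k)}{\bf x}_k|^2}{\|A_{(i_k)}\|_2^2}.$$
Taking conditional expectation under the probability rule \eqref{2.6}, and using that $\widetilde{\bf r}_{(i)} = {\bf b}_{(i)} - A_{(i)} {\bf x}_k$ on $\upsilon_k$ and is zero elsewhere, the expected decrement equals $\sum_{i \in \upsilon_k} |\widetilde{\bf r}_{(i)}|^4 / \bigl(\|\widetilde{\bf r}_k\|_2^2\,\|A_{(i)}\|_2^2\bigr)$. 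Invoking the defining inequality of $\upsilon_k$ in \eqref{2.5} termwise, this expectation is bounded below by $\epsilon_k \|{\bf b} - A{\bf x}_k\|_2^2$.

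Next I would estimate $\epsilon_k$ from below using \eqref{288}. For $k \geq 1$, the preceding Kaczmarz step forces $A_{(i_{k-1})}{\bf x}_k = {\bf b}_{(i_{k-1})}$, so the $i_{k-1}$-th component of ${\bf r}_k$ vanishes. Therefore
$$\|{\bf r}_k\|_2^2 = \sum_{i \neq i_{k-1}} |{\bf r}_{(i)}|^2 \leq \max_i \frac{|{\bf r}_{(i)}|^2}{\|A_{(i)}\|_2^2}\sum_{i \neq i_{k-1}} \|A_{(i)}\|_2^2 \leq \gamma \max_i \frac{|{\bf r}_{(i)}|^2}{\|A_{(i)}\|_2^2},$$
which via \eqref{288} yields $\epsilon_k \geq \theta/\gamma + (1-\theta)/\|A\|_F^2$ for $k \geq 1$. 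For $k=0$ this sharper device is unavailable, so one falls back on the crude estimate $\max_i |{\bf r}_{(i)}|^2/\|A_{(i)}\|_2^2 \geq \|{\bf r}_0\|_2^2/\|A\|_F^2$, giving $\epsilon_0 \geq 1/\|A\|_F^2$ independently of $\theta$, which accounts for the $\theta$-free first bound in the theorem.

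Finally, combining the decrement with the residual estimate $\|{\bf b} - A{\bf x}_k\|_2^2 = \|A({\bf x}_k - {\bf x})\|_2^2 \geq \lambda_{\min}(A^HA)\|{\bf x}_k - {\bf x}\|_2^2$ and then pulling a common factor $\|A\|_F^{-2}$ out of the bracket to rewrite the contraction factor in the $\kappa^{-2}(A)$ form yields precisely the two claimed recursions \eqref{299} and its $k=0$ counterpart. The main obstacle, and the heart of the argument, is the $\gamma$-bound: one must recognize that the very row selected at step $k-1$ drops out of the support of ${\bf r}_k$, shrinking the effective denominator from $\|A\|_F^2$ to $\gamma$. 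Without this observation one would only reproduce the randomized Kaczmarz rate, and it is exactly this structural fact that distinguishes the $k\geq 1$ recursion from the $k=0$ base case and ensures that the relaxed greedy bound improves on Theorem~2.1.
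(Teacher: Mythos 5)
Your reconstruction is correct: the paper states this theorem as a cited result from Bai and Wu \cite{bibitem2} without reproducing a proof, and your argument is precisely the standard one from that reference — the Pythagorean identity, the lower bound $\mathbb{E}_k\|{\bf x}_{k+1}-{\bf x}_k\|_2^2 \geq \epsilon_k\|{\bf r}_k\|_2^2$ via the defining inequality of $\upsilon_k$, and the key observation that ${\bf r}_{(i_{k-1})}=0$ shrinks the denominator from $\|A\|_F^2$ to $\gamma$ for $k\geq 1$. The same orthogonality and $\gamma$-bound devices reappear in this paper's own proof of Theorem 3.2, so your approach is fully consistent with the techniques used here.
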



However, both the greedy Kaczmarz method and the relaxation method may suffer from large overhead in practice \cite{bibitem2,bibitem3,bibitem18}. More precisely, to determine the index set $\upsilon _k$ defined in \eqref{2.5}, in the algorithms we have to scan the residual vector from scratch in each iteration. This is unfavorable when the size of the matrix is large, and the overhead will be prohibitively large for big data problems. Furthermore, the relaxed greedy
randomized Kaczmarz methods are parameter-dependent \cite{bibitem2,bibitem18}, and the optimal parameters are difficult to choose in advance.
Therefore, it is urgent to investigate new probability
criteria for selecting the working rows for the greedy Kaczmarz method and its variants, so that one can further speed up the convergence of the  Kaczmarz-type methods.


\section{A Randomized Kaczmarz Method with Simple Sampling}\label{Sec3}
\setcounter{equation}{0}
The randomized Kaczmarz method only adopts a probability criterion that is determined
by the ratio between the
2-norms of the rows of $A$ and $\|A\|_F$.
As a comparison, the probability criterion used by the greedy Kaczmarz method
is essentially determined by two factors: one is the largest entry of the
residual with respect to the current iterate, and the other is the ratio between the
norms of some rows of the coefficient matrix and the coefficient matrix itself \cite{bibitem3}; see \eqref{2.4}--\eqref{2.6}.

In this section, we first present a new probability criterion for choosing working rows in the randomized Kaczmarz method, and show rationality of the proposed strategy. Second, based on Chebyshev's law of large numbers and Z-test, we propose a simple sampling approach for randomized Kaczmarz method. Third, we propose a Kaczmarz method with simple random sampling for large linear systems, which is the main algorithm of this paper, and discuss the convergence of the proposed method.

\subsection{A partially randomized Kaczmarz method}

Let ${\bf x}$ be the exact solution of the equation \eqref{1.1}, we have from \eqref{2.1} that
%
\begin{align}\label{31}
A_{\left ( ik \right )} \left ( {\bf x}_{k+1}-{\bf x} \right )&=A_{\left ( ik \right )} \left ({\bf x}_k-{\bf x}+\frac{{\bf{b}}_{\left ( ik \right )}-A_{\left ( ik \right )}{\bf x}_k}{\left \| A_{\left ( ik \right )} \right \|_{2}^{2}}\left ( A_{\left ( ik \right )} \right )^{H}  \right )\nonumber\\
&=A_{\left ( ik \right )} \left ( {\bf x}_{k}-{\bf x} \right )+\left ( {\bf{b}}_{\left ( ik \right )}-A_{\left ( ik \right )}{\bf x}_k \right )\nonumber\\
&={\bf{b}}_{\left ( ik \right )}-A_{\left ( ik \right )}{\bf x}\nonumber\\
&=0,\quad \quad k=0, 1, \ldots
\end{align}
That is to say, for the randomized Kaczmarz method, there holds \cite{bibitem8}
\begin{equation}\label{3.1}
\left \| {\bf x}_{k+1}-{\bf x} \right \|_{2}^{2}=\left \| {\bf x}_{k}-{\bf x} \right \|_{2}^{2}-\left \| {\bf x}_{k+1}-{\bf x}_k \right \|_{2}^{2}.
\end{equation}
Note that ${\bf x}_k$ is measurable, taking conditional expectations on both sides of the above equality gives
\begin{equation*}\label{f3.2}
\mathbb{E}_{k}\left \| {\bf x}_{k+1}-{\bf x} \right \|_{2}^{2}=\left \| {\bf x}_{k}-{\bf x} \right \|_{2}^{2}-\mathbb{E}_{k}\left \| {\bf x}_{k+1}-{\bf x}_k \right \|_{2}^{2}.
\end{equation*}
However, ${\bf x}_{k+1}$ is not only related to the previous approximation ${\bf x}_k$, but also to all the predecessors $\{{\bf x}_v\}_{v=0}^k$.
Taking {\it expectation} instead of {\it conditional expectations} on both sides of \eqref{3.1} yields
\begin{equation}\label{f3.3}
E\left \| {\bf x}_{k+1}-{\bf x} \right \|_{2}^{2}=E\left \| {\bf x}_{k}-{\bf x} \right \|_{2}^{2}-E\left \| {\bf x}_{k+1}-{\bf x}_k \right \|_{2}^{2},
\end{equation}
which can be rewritten as
\begin{equation}\label{f3.4}
{E}\left \| {\bf x}_{k+1}-{\bf x} \right \|_{2}^{2}=\left(1-\frac{{E}\left \| {\bf x}_{k+1}-{\bf x}_k \right \|_{2}^{2}}{{E}\left \| {\bf x}_{k}-{\bf x} \right \|_{2}^{2}}\right){E}\left \| {\bf x}_{k}-{\bf x} \right \|_{2}^{2}.
\end{equation}

By using \eqref{f3.3} recursively, we get
\begin{align*}
{E}\left \|{\bf x}_{k}-{\bf x} \right \|_{2}^{2}&={E}\left \| {\bf x}_{k-1}-{\bf x} \right \|_{2}^{2}-{E}\left \| {\bf x}_{k}-{\bf x}_{k-1} \right \|_{2}^{2}\\
&={E}\left \|{\bf x}_{k-2}-{\bf x} \right \|_{2}^{2}-{E}\left \| {\bf x}_{k}-{\bf x}_{k-1} \right \|_{2}^{2}-{E}\left \| {\bf x}_{k-1}-{\bf x}_{k-2} \right \|_{2}^{2}\\
&=\cdots\\
&={E}\left \|{\bf x}_{0}-{\bf x} \right \|_{2}^{2}-{E}\left \| {\bf x}_{0}-{\bf x}_{1} \right \|_{2}^{2}-\cdots-\left \| {\bf x}_{k}-{\bf x}_{k-1} \right \|_{2}^{2}\\
&=\left \|{\bf x}_{0}-{\bf x} \right \|_{2}^{2}-\sum_{v=1}^{k-1}{E}\left \| {\bf x}_{v-1}-{\bf x}_{v} \right \|_{2}^{2}.
\end{align*}
As a result, \eqref{f3.4} can be reformulated as
\begin{equation}\label{f3.5}
{E}\left \| {\bf x}_{k+1}-{\bf x} \right \|_{2}^{2}=\left(1-\frac{{E}\left \| {\bf x}_{k+1}-{\bf x}_k \right \|_{2}^{2}}{\left \| {\bf x}_{0}-{\bf x} \right \|_{2}^{2}-\sum_{v=1}^{k-1}{E}\left \| {\bf x}_{v-1}-{\bf x}_{v} \right \|_{2}^{2}}\right){E}\left \| {\bf x}_{k}-{\bf x} \right \|_{2}^{2}.
\end{equation}

So far, we have established the relationship between ${E}\left \| {\bf x}_{k+1}-{\bf x} \right \|_{2}^2$ and ${E}\left \| {\bf x}_{k}-{\bf x} \right \|_{2}^2$. By \eqref{f3.5}, it is seen that the convergence speed of the randomized Kaczmarz method is closely related to the expectation of $\|{\bf x}_{k+1}-{\bf x}_k\|_2^2$ and those of $\|{\bf x}_{v-1}-{\bf x}_{v}\|_2^2,~v=1,2,\ldots,k$, and the larger
\begin{equation}\label{3.5}
{E}\left \| {\bf x}_{v-1}-{\bf x}_{v} \right \|_{2}^{2}={\sum_{i=1}^{m} pr\left(iv\right)\frac{{{\bf r}_{iv}}^{2}}{\left\|A_{(iv)}\right\|_{2}^{2}} },\quad v=1,2,\ldots,k+1,
\end{equation}
the faster the convergence speed will be, where ${\bf r}_{iv}$ is the $i$-th element of the vector ${\bf r}_v={\bf b}_{v}-A{\bf x}_{v}$. We need the following classical inequality before discussing the choice of the probabilities $\{pr\left(iv\right)\}_{i=1}^m$.
\begin{theorem}{\cite{bibitem22}}\label{Thm3.1}
Given two sequences
\begin{equation}\label{37}
0\leq c_{1}\leq c_{2}\leq\cdots\leq c_{m}\quad and \quad 0\leq p_{1}\leq p_{2}\leq\cdots\leq p_{m}\leq 1.
\end{equation}
Let $\left\{\widetilde{p}_{1},\widetilde{p}_{2},\ldots,\widetilde{p}_{m}\right\}$ be any rearrangement of the set $\left\{{p}_{1},{p}_{2},\ldots,{p}_{m}\right\}$. Then $E(c)=\sum\limits_{\substack{i=1}}^{m}\widetilde{p}_{i}c_{i}$ reaches the maximal value
if and only if $\widetilde{p}_{i}={p}_{i},~i=1,2,\ldots,m$.
\end{theorem}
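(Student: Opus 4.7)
The plan is to prove Theorem~\ref{Thm3.1} by a classical pairwise-swap (exchange) argument, which is the standard route for the rearrangement inequality. Since the set of all rearrangements of $\{p_1,\ldots,p_m\}$ is finite, $E(c)=\sum_{i=1}^m \widetilde{p}_i c_i$ attains its maximum on some arrangement $\widetilde{p}^\star$. My goal is to show that any maximizer must satisfy $\widetilde{p}^\star_i=p_i$ for every $i$, and conversely that this monotone arrangement indeed achieves the maximum.

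First I would establish the elementary two-term inequality. Suppose indices $i<j$ satisfy $c_i\leq c_j$ and consider two arrangements that agree everywhere except that the values at positions $i$ and $j$ are swapped. Writing $a=\widetilde{p}_i$ and $b=\widetilde{p}_j$, a direct computation gives
\begin{equation*}
(a c_i + b c_j) - (b c_i + a c_j) = (a-b)(c_i-c_j).
\end{equation*}
Hence if $a>b$ (i.e.\ the larger weight is placed on the smaller $c$-value), the swap that sends the larger weight to position $j$ strictly increases $E$ whenever $c_i<c_j$, and leaves $E$ unchanged when $c_i=c_j$. Either way, the swap never decreases $E$.

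Second, I would use this lemma to drive a finite bubble-sort argument. Start from an arbitrary rearrangement $\widetilde{p}$. Whenever there exist $i<j$ with $\widetilde{p}_i > \widetilde{p}_j$, perform the swap above; by the two-term inequality the functional $E$ is non-decreasing, and the number of inversions of $\widetilde{p}$ (with respect to the index order) strictly decreases. After at most $\binom{m}{2}$ such swaps the resulting arrangement has no inversions, i.e.\ it is the sorted arrangement $\widetilde{p}_i=p_i$. This shows the sorted assignment is a maximizer. For the ``only if'' direction, I would argue that if some maximizer $\widetilde{p}^\star$ has an inversion $\widetilde{p}^\star_i > \widetilde{p}^\star_j$ with $c_i < c_j$, the same swap would strictly increase $E$, contradicting maximality; thus all inversions occur only between positions with equal $c$-values, in which case the arrangement is indistinguishable from the sorted one in the expression for $E(c)$, and we may canonically identify it with $\widetilde{p}_i = p_i$.

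I do not anticipate a significant obstacle here: the swap inequality is a one-line calculation and the bubble-sort argument is standard. The only mild subtlety is the uniqueness clause in the presence of ties (either among the $c_i$'s or among the $p_i$'s); for this I would phrase ``$\widetilde{p}_i=p_i$'' as an equality among sorted sequences, so that when several $p_i$'s or $c_i$'s coincide, any rearrangement compatible with the monotone ordering is a legitimate maximizer and is counted as ``the'' sorted assignment. This keeps the statement and proof consistent without weakening the conclusion needed in the sequel (the larger the probability assigned to the larger residual term, the larger the expected progress in~\eqref{3.5}).
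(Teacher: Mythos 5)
Your exchange argument is correct, and it is worth noting that the paper itself supplies no proof of this statement: Theorem~\ref{Thm3.1} is simply quoted from the Hardy--Littlewood--P\'olya inequality reference \cite{bibitem22}, so there is no in-paper argument to compare against. Your two-term identity $(ac_i+bc_j)-(bc_i+ac_j)=(a-b)(c_i-c_j)$ together with the inversion-counting bubble-sort is exactly the classical proof of the rearrangement inequality, and it establishes both that the sorted assignment is a maximizer and that any maximizer can differ from it only across tied values. You are also right to flag that the ``if and only if'' clause, read literally, fails when there are ties among the $c_i$'s or the $p_i$'s; your convention of identifying rearrangements that agree on the value of $E(c)$ is the correct repair, and is in fact more careful than the statement as printed. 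Nothing in the downstream use of the theorem (the choice of probabilities in~\eqref{38} maximizing the expected progress in~\eqref{3.5}) requires more than the ``if'' direction, so your proof fully covers what the paper needs.
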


Given the set of values $\left \{ c_i\right \}_{i=1}^m$ and the set of probabilities $\left \{ p_i\right \}_{i=1}^m$, the probabilities can be arranged in any order $\left \{\widetilde{p}_i\right \}_{i=1}^m$ theoretically.
Theorem \ref{Thm3.1} shows that, if the given sequences $\{c_{i}\}$'s and $\{p_{i}\}$'s share the same order,
then $\sum\limits_{i=1}^{m}p_{i}c_{i}$ will reach the maximal value for arbitrary disordered arrangements on the $\{p_{i}\}$'s. Specifically, if we denote by $c_{i}={{\bf r}_{ik}^2}/{\left\|A_{ik}\right\|_{2}^{2}},~1\leq i\leq m$, and choose the probabilities as
$$
pr(ik)=\frac{\frac{{\bf r}_{ik}^{2}}{\left\|A_{ik}\right\|^{2}_{2}}}{\sum_{i=1}^{m} \frac{{{\bf r}_{ik}}^{2}}{\left\|A_{ik}\right\|_{2}^{2}} },
$$
then the $\{p_i\}$'s are nothing but the strategy used in the greedy randomized Kaczmarz method; refer to \eqref{2.6}. 

Further, we can use the probabilities as follows
\begin{equation}\label{38}
pr\left ( ik \right )=\frac{\frac{\left|{\bf r}_{ik}\right|^{t}}{\left\|A_{ik}\right\|^{t}_{2}}}{\sum_{i=1}^{m} \frac{{\left|{\bf r}_{ik}\right|}^{t}}{\left\|A_{ik}\right\|_{2}^{t}}},\quad i=1,2,\ldots,m,
\end{equation}
where $t\geq 1$ is a positive integer. Notice that this choice also satisfies the condition \eqref{37}.
Moreover, the larger the parameter $t$, the higher {\it the probability significance} and the larger the ${E}\left\|{\bf x}_{k+1}-{\bf x}_{k}\right\|^{2}_{2}$. Indeed, we have
\begin{equation}\label{399}
\lim\limits_{t\to\infty}\frac{\max\limits_{1\leq i\leq m}\left (\frac{\left|{\bf r}_{ik}\right|^{t}}{\left\|A_{ik}\right\|^{t}_{2}}  \right )}{\sum\limits_{i=1}^{m} \frac{{\left|{\bf r}_{ik}\right|}^{t}}{\left\|A_{ik}\right\|_{2}^{t}} }=\lim\limits_{t\to\infty}\frac{1}{{{\sum\limits_{i=1}^{m} \left(\frac{{\left|{\bf r}_{ik}\right|}^{t}}{\left\|A_{ik}\right\|_{2}^{t}}/\max\limits_{1\leq i\leq m}\left (\frac{\left|{\bf r}_{ik}\right|^{t}}{\left\|A_{ik}\right\|^{t}_{2}}  \right )\right)}}}=1.
\end{equation}
With the probabilities $pr\left ( ik \right )$ defined in \eqref{38}, we can present the following algorithm. The key is that the rows corresponding to the current maximum homogenization residuals are selected.
\begin{algorithm}\label{alg4}
{\bf A randomized Kaczmarz method with residual homogenizing}\\
{{\bf Input:} $A$, $\bf b$, $t\geq 1$, and ${\bf x}_0$, as well as the maximal iteration number $l$;}\\
{{\bf Output:} The approximate solution $\widetilde{\bf x}$;}\\
{\bf 1}. for $k=0, 1, \ldots, l-1$ do\\
{\bf 2}. Select $ik$ with probability $pr\left ( row=ik \right )=\frac{\frac{\left|{\bf r}_{ik}\right|^{t}}{\left\|A_{ik}\right\|^{t}_{2}}}{\sum_{i=1}^{m} \frac{{\left|{\bf r}_{ik}\right|}^{t}}{\left\|A_{ik}\right\|_{2}^{t}}}$;\\
{\bf 3}. Let ${\bf x}_{k+1}={\bf x}_k+\frac{{\bf{b}}_{\left ( ik \right )}-A_{\left ( ik \right )}{\bf x}_k}{\left \| A_{\left ( ik \right )} \right \|_{2}^{2}}\left ( A_{\left ( ik \right )} \right )^{H}$. If ${\bf x}_{k+1}$ is accurate enough, then stop, else continue;\\
{\bf 4.} endfor
\end{algorithm}

\begin{figure}[H]\label{Fig3.1}
\begin{minipage}{0.4\linewidth}
  \centerline{\includegraphics[width=6cm]{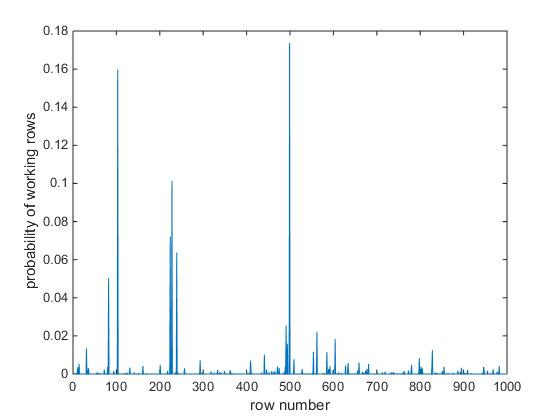}}
  \centerline{(a) t=2}
\end{minipage}
\hfill
\begin{minipage}{0.4\linewidth}
  \centerline{\includegraphics[width=6cm]{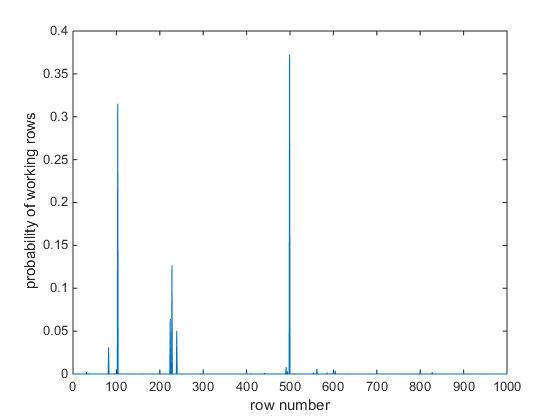}}
  \centerline{(b) t=4}
\end{minipage}
\vfill
\begin{minipage}{0.4\linewidth}
  \centerline{\includegraphics[width=6cm]{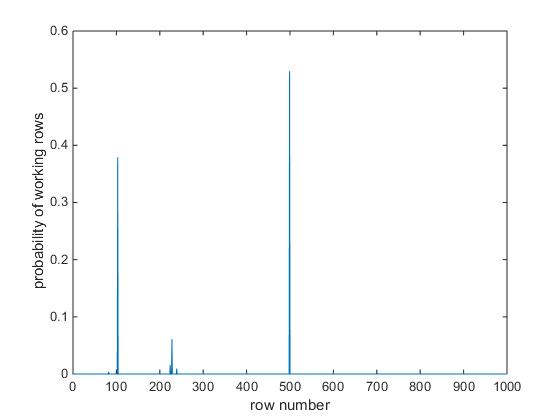}}
  \centerline{(c) t=8}
\end{minipage}
\hfill
\begin{minipage}{0.4\linewidth}
  \centerline{\includegraphics[width=6cm]{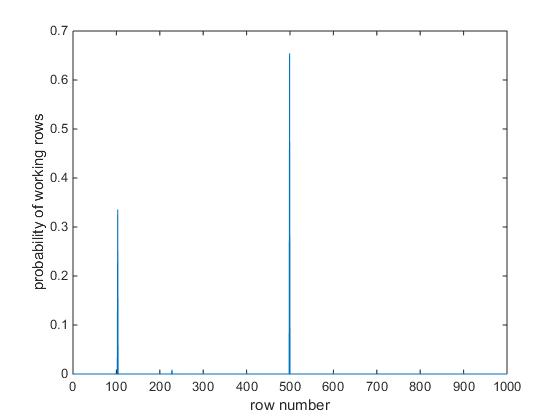}}
  \centerline{(d) t=16}
\end{minipage}
\caption{Probabilities of the working rows in Algorithm \ref{alg4} (the first iteration) with different $t=2,4,8,16$. The coefficient matrix $A$ is randomly generated by using the MATLAB command {\tt randn(1000,100)}.}
\label{fig:res}
\end{figure}
%

\begin{figure}[h]

  \centerline{\includegraphics[width=14.0cm]{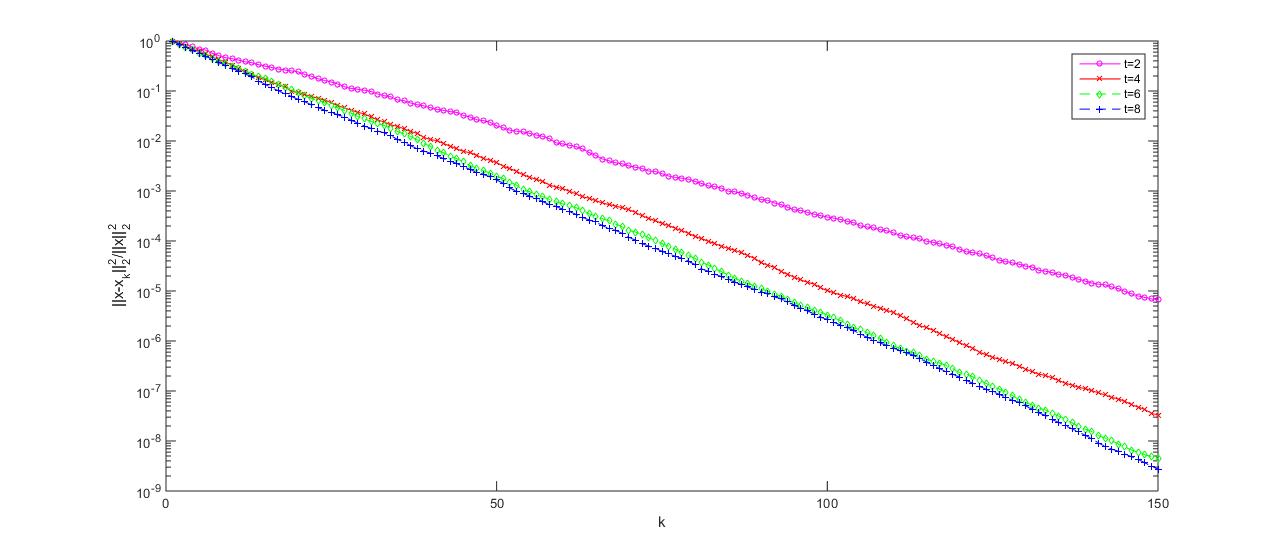}}\label{Fig32}

\caption{Convergence curves of Algorithm \ref{alg4} with $t=2,4,6,8$. The coefficient matrix $A$ is randomly generated by using the MATLAB command {\tt randn(1000,100)}.}
\label{fig:rres}
\end{figure}
%

To illustrate the rationality of using \eqref{38} more precisely, we plot in Figure \ref{Fig3.1} the probabilities of the working rows of Algorithm \ref{alg4} (the first iteration) with different $t=2,4,8,16$, where the coefficient matrix $A$ is randomly generated by using the MATLAB command {\tt randn(1000,100)}. It is seen that the rows with larger probabilities are easily accessible as $t$ increases.
In Figure 3.2, we plot the convergence curves of Algorithm \ref{alg4} with $t=2,4,6,8$. It is observed that the algorithm converges faster with a larger $t$.
Thus, a natural idea is to set $t\to\infty$ in Algorithm \ref{alg4}.

According to \eqref{399}, the probability of choosing the $ik$-th row such that
\begin{equation}\label{39}
\frac{|{\bf r}_{(ik)}|}{\| A_{(ik)}\|_{2}}=\frac{\left | {{\bf{b}}_{\left ( ik \right )}-A_{\left ( ik \right )}{\bf x}_{k}} \right |}{\left \| A_{\left ( ik \right )} \right \|_{2}}=\max_{1\leq j\leq m}\left \{ \frac{\left | {{\bf{b}}_{\left ( jk \right )}-A_{\left ( jk \right )}{\bf x}_{k}} \right |}{\left \| A_{\left ( jk \right )} \right \|_{2}}\right \}
\end{equation}
is one. So we have the following algorithm.
\begin{algorithm}\label{alg41}
{\bf A partially randomized Kaczmarz method for linear systems {(PRK)}}\\
{{\bf Input:} $A$, $\bf b$, $l$ and $\bf{x_0}$;}\\
{{\bf Output:}  The approximate solution $\widetilde{\bf x}$;}\\
{\bf 1}. for $k=0,1,\ldots,l-1$ do\\
{\bf 2}. Select the working row number $ik$ according to \eqref{39};
\\
{\bf 3}. Let ${\bf{x}}_{k+1}={\bf{x}}_k+\frac{{\bf{b}}_{\left ( ik \right )}-A_{\left ( ik \right )}{\bf{x}}_k}{\left \| A_{\left ( ik \right )} \right \|_{2}^{2}}\left ( A_{\left ( ik \right )} \right )^{H}$. If ${\bf x}_{k+1}$ is accurate enough, then stop, else continue;\\
{\bf 4}. endfor
\end{algorithm}

Notice that this algorithm is no longer a random algorithm in the general sense, so we called it ``partially randomized" Kaczmarz method. On the other hand, recall that in the greedy randomized Kaczmarz method (GRK) and the relaxed greedy randomized Kaczmarz method (RGRK), one has to evaluate $\epsilon_k$ and determine the index set $\upsilon _k$ during each iteration, which is very time-consuming. As a comparison, there is no need to determine the index set $\upsilon _k$ anymore, and it is only required to find the row with the largest (relative) residual $|{\bf r}_{(ik)}|/\| A_{(ik)}\|_{2}$. Thus, Algorithm \ref{alg41} can reduce the computational overhead per iteration of GRK and RGRK significantly.

\begin{remark}
We point out that Algorithm \ref{alg41} is equivalent to the relaxed greedy randomized Kaczmarz method with the relaxation parameter $\theta=1$, making this method deteriorate
to a ``partially" randomized process; see \eqref{288}. However, in \cite[pp.24]{bibitem2}, Bai and Wu emphasize that $\theta=1$ is not a good choice for the relaxed greedy randomized Kaczmarz method. Here our contribution is to indicate that $\theta=1$ is a good choice indeed, from the probability significance point of view. This algorithm also appeared in \cite{bibitem23}, but we consider the original intention of this algorithm is different from \cite{bibitem23}. In this paper, we regard this algorithm as a special case of random method, and we get better convergence result.
\end{remark}

The following theorem shows the convergence of the partially randomized Kaczmarz method.
\begin{theorem}\label{Thm3.2}
Let ${\bf x}$ be the solution of \eqref{1.1}. Let
\begin{equation}\label{eqq312}
\gamma =\max_{1\leq i\leq m}\sum_{j=1, j\neq i}^{m}\left \| A_{\left ( j \right )} \right \|_{2}^{2},\quad{\rm and}\quad \xi=\left\| A \right\|_{F}^{2}-\left\| A_{\left(s\right) }\right\|_{2}^{2},
\end{equation}
where $A_{\left(s\right)}$ denotes the second smallest row of $A$ in 2-norm.
Then Algorithm \ref{alg41} converges to ${\bf x}$ in expectation, with
\begin{equation}\label{311}
\mathbb{E}_k\left \| {\bf x}_{k+1}-{\bf x} \right \|_{2}^{2}\leq \left[\Big (  1-\frac{\left \| A \right \|_{F}^{2}}{\gamma }\kappa^{-2}\left ( A \right ) \Big )\Big (  1-\frac{\left \| A \right \|_{F}^{2}}{\xi }\kappa^{-2}\left ( A \right ) \Big )\right]\left \| {\bf x}_{k-1}-{\bf x} \right \|_{2}^{2},\quad k\geq 1.
\end{equation}
\end{theorem}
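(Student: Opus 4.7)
The plan is to iterate the Pythagorean-type identity \eqref{3.1} over two consecutive steps and then combine two per-step lower bounds on $\|{\bf x}_{k+1}-{\bf x}_k\|_2^2 = |{\bf r}_{ik,k}|^2/\|A_{(ik)}\|_2^2$, whose sharpness depends on where the previously active row sits in the sorted ordering of the row $2$-norms of $A$.

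First, I would apply \eqref{3.1} twice to obtain
\[
\|{\bf x}_{k+1}-{\bf x}\|_2^2 \;=\; \|{\bf x}_{k-1}-{\bf x}\|_2^2 \;-\; \frac{|{\bf r}_{i(k-1),k-1}|^2}{\|A_{(i(k-1))}\|_2^2} \;-\; \frac{|{\bf r}_{ik,k}|^2}{\|A_{(ik)}\|_2^2}.
\]
The key observation is that Algorithm \ref{alg41}'s projection at step $k-1$ forces the $i(k-1)$-st entry of ${\bf r}_k={\bf b}-A{\bf x}_k$ to vanish, so $\|{\bf r}_k\|_2^2=\sum_{j\neq i(k-1)}|{\bf r}_{j,k}|^2$, and, as a by-product, $ik\neq i(k-1)$ and, one step earlier, $i(k-1)\neq i(k-2)$. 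Combining this with the argmax rule \eqref{39}, namely $|{\bf r}_{j,k}|^2/\|A_{(j)}\|_2^2\leq|{\bf r}_{ik,k}|^2/\|A_{(ik)}\|_2^2$ for every $j$, yields
\[
\frac{|{\bf r}_{ik,k}|^2}{\|A_{(ik)}\|_2^2}\;\geq\; \frac{\|{\bf r}_k\|_2^2}{\|A\|_F^2-\|A_{(i(k-1))}\|_2^2},
\]
together with the analogous bound at step $k-1$ whose denominator is $\|A\|_F^2-\|A_{(i(k-2))}\|_2^2$.

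Next, I would split on whether $i(k-1)$ is the index of the row of smallest $2$-norm of $A$. If it is, then $\|A\|_F^2-\|A_{(i(k-1))}\|_2^2=\gamma$, and since $i(k-2)\neq i(k-1)$ the index $i(k-2)$ cannot be the minimising one, so $\|A_{(i(k-2))}\|_2^2\geq\|A_{(s)}\|_2^2$ and $\|A\|_F^2-\|A_{(i(k-2))}\|_2^2\leq\xi$. If it is not, then $\|A_{(i(k-1))}\|_2^2\geq\|A_{(s)}\|_2^2$ already gives $\|A\|_F^2-\|A_{(i(k-1))}\|_2^2\leq\xi$, while the trivial bound $\|A\|_F^2-\|A_{(i(k-2))}\|_2^2\leq\gamma$ handles the other step. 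In either branch one denominator is bounded by $\gamma$ and the other by $\xi$.

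Finally, I would substitute $\|{\bf r}_v\|_2^2\geq\sigma_{\min}^2(A)\|{\bf x}_v-{\bf x}\|_2^2$ for $v=k-1,k$, rewrite each factor via $\kappa^{-2}(A)=\sigma_{\min}^2(A)/\|A\|_F^2$, and multiply the two resulting contractions $1-\|A\|_F^2\kappa^{-2}(A)/\gamma$ and $1-\|A\|_F^2\kappa^{-2}(A)/\xi$ to reach \eqref{311}; Algorithm \ref{alg41} being deterministic once ${\bf x}_{k-1}$ is fixed, $\mathbb{E}_k$ collapses to the value itself. The main obstacle I foresee is the bookkeeping in the case analysis — specifically, handling ties at the smallest row norm by fixing a convention for $A_{(s)}$ that preserves $\|A\|_F^2-\|A_{(i)}\|_2^2\leq\xi$ for every $i$ outside the minimising class, and separately treating the edge case $k=1$ where there is no $i(k-2)$ and the step-$(k-1)$ bound must be replaced by a direct single-step estimate.
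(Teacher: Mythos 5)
Your proposal is correct and follows essentially the same route as the paper: iterate the Pythagorean identity \eqref{3.1} over two consecutive steps, use the fact that the projection annihilates the active residual entry (so consecutive working rows differ) to bound the argmax gain from below by $\|{\bf r}_k\|_2^2$ divided by the row-norm sum excluding the previously active row, and then invoke $\|{\bf r}\|_2^2\geq\lambda_{\min}(A^HA)\|{\bf x}_k-{\bf x}\|_2^2$. Your explicit case analysis on whether the excluded index is the minimal-norm row (so that one denominator is $\leq\gamma$ and the other $\leq\xi$) is in fact more careful than the paper's write-up, which assigns $\gamma$ and $\xi$ to the two steps without justifying that the $\xi$-bound applies to the earlier one; your flagged $k=1$ edge case (no $i(k-2)$ to exclude) is a genuine gap the paper also leaves unaddressed.
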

\begin{proof}
Taking conditional expectation on both sides of \eqref{3.1} gives
\begin{align}\label{3.7}
\mathbb{E}_k\left \|{\bf x}_{k+1}-{\bf x} \right \|_{2}^{2}&=\left \| {\bf x}_{k}-{\bf x} \right \|_{2}^{2}-\mathbb{E}_k\left \| {\bf x}_{k+1}-{\bf x}_{k} \right \|_{2}^{2}\nonumber\\
&=\left \| {\bf x}_{k}-{\bf x} \right \|_{2}^{2}-\max_{1\leq j\leq m}\left \{ \frac{\left | {{\bf{b}}_{\left ( jk \right )}-A_{\left ( jk \right )}{\bf x}_{k}} \right |^{2}}{\left \| A_{\left ( jk \right )} \right \|_{2}^{2}}\right \},
\end{align}
where \eqref{3.7} follows from \eqref{39} and the probability of choosing the $ik$-th row is (almost) 1.
We notice from \eqref{31} that
\begin{align}\label{eq313}
{\bf r}_{(ik)}=A_{\left ( ik \right )} \left ( {\bf x}_{k+1}-{\bf x} \right )&=A_{\left ( ik \right )} \left ({\bf x}_{k}-{\bf x}+\frac{{\bf{b}}_{\left ( ik \right )}-A_{\left ( ik \right )}{\bf x}_{k}}{\left \| A_{\left ( ik \right )} \right \|_{2}^{2}}\left ( A_{\left ( ik \right )} \right )^{H}  \right )\nonumber\\
&=A_{\left ( ik \right )} \left ( {\bf x}_{k}-{\bf x} \right )+\left ( {\bf{b}}_{\left ( ik \right )}-A_{\left ( ik \right )}{\bf x}_{k} \right )\nonumber\\
&={\bf{b}}_{\left ( ik \right )}-A_{\left ( ik \right )}{\bf x}\nonumber\\
&=0.
\end{align}
In other words, the probability of choosing the $ik$-th row is zero.
Thus, if we denote by $C_{jk}={{\bf r}_{(jk)}^{2}}/{\left\|A_{jk}\right\|_{2}^{2}}$, then
\begin{align}\label{3.8}
\max_{1\leq j\leq m}\left \{ \frac{\left | {{\bf{b}}_{\left ( jk \right )}-A_{\left ( jk \right )}{\bf x}_{k}} \right |^{2}}{\left \| A_{\left ( jk \right )} \right \|_{2}^{2}}\right \}&=\frac{\max\limits_{1\leq j\leq m}\left \{ \frac{\left | {{\bf{b}}_{\left ( jk \right )}-A_{\left ( jk \right )}{\bf x}_{k}} \right |^{2}}{\left \| A_{\left ( jk \right )} \right \|_{2}^{2}}\right \}}{\left \| {\bf r}_{k} \right \|_{2}^{2}}\left \| {\bf r}_{k} \right \|_{2}^{2}\nonumber\\
&=\frac{\max\limits_{1\leq j\leq m}\left ( C_{jk} \right )}{\sum_{jk=1}^{m}{\left\|A_{\left(jk\right)}\right\|_2^2}C_{jk}}\left \| {\bf r}_{k} \right \|_{2}^{2}\nonumber\\
&\geq \frac{\left \| {\bf r}_{k} \right \|_{2}^{2}}{\sum_{jk=1}^{m}\left \| A_{\left(jk\right)} \right \|_{2}^{2}}
=\frac{\left \| {\bf r}_{k} \right \|_{2}^{2}}{\sum_{jk=1, jk\neq jk-1}^{m}\left \| A_{\left(jk\right)} \right \|_{2}^{2}}\nonumber\\
&\geq \frac{\lambda_{\min}\left (A^{H}A\right )}{\gamma }\left \| {\bf x}_{k}-{{\bf x}} \right \|_{2}^{2}.
\end{align}

So it follows from \eqref{3.7} and \eqref{3.8} that
\begin{eqnarray}
\mathbb{E}_k\left \|{\bf x}_{k+1}-{\bf x} \right \|_{2}^{2}&=\left \| {\bf x}_{k}-{\bf x} \right \|_{2}^{2}-\max_{1\leq j\leq m}\left \{ \frac{\left | {{\bf{b}}_{\left ( jk \right )}-A_{\left ( jk \right )}{\bf x}_{k}} \right |^{2}}{\left \| A_{\left ( jk \right )} \right \|_{2}^{2}}\right \}\nonumber\\
&\leq \left \| {\bf x}_{k}-{\bf x} \right \|_{2}^{2}-\frac{\lambda_{\min}\left (A^{H}A\right )}{\gamma }\left \| {\bf x}_{k}-{{\bf x}} \right \|_{2}^{2}\nonumber\\
&= \left ( 1-\frac{\left \| A \right \|_{F}^{2}}{\gamma }\kappa \left ( A \right )^{-2}\right )  \left \| {\bf x}_{k}-{\bf x} \right \|_{2}^{2}.\label{3.10}
\end{eqnarray}
Notice from \eqref{eq313} that Algorithm \ref{alg41} will not select the same row in two consecutive iterations. Thus,
\begin{eqnarray}
\mathbb{E}_{k-1}\left \|{\bf x}_{k}-{\bf x} \right \|_{2}^{2}&=\left \| {\bf x}_{k-1}-{\bf x} \right \|_{2}^{2}-\frac{{\bf r}_{(ik-1)}^2}{\sum_{jk-1=1, jk-1\neq jk-2}^{m}\big(\left\| A \right\|_{F}^{2}-\left\| A_{(jk-1)} \right\|_{2}^{2}\big)}\nonumber\\
&\leq \left \| {\bf x}_{k-1}-{\bf x} \right \|_{2}^{2}-\frac{\lambda_{\min}\left (A^{H}A\right )}{\xi }\left \| {\bf x}_{k-1}-{{\bf x}} \right \|_{2}^{2}\nonumber\\
&= \left ( 1-\frac{\left \| A \right \|_{F}^{2}}{\xi }\kappa \left ( A \right )^{-2}\right )  \left \| {\bf x}_{k-1}-{\bf x} \right \|_{2}^{2},
\label{3.11}
\end{eqnarray}
and a combination of \eqref{3.10} with \eqref{3.11} gives \eqref{311}.
\end{proof}

\begin{remark}
As $\|A\|_F\geq\gamma\geq\xi$, we have
\begin{equation}
\Big( 1-\frac{\left \| A \right \|_{F}^{2}}{\gamma }\kappa(A)^{-2} \Big)\Big(1-\frac{\left \| A \right \|_{F}^{2}}{\xi}\kappa(A)^{-2} \Big)\leq\Big[1-\frac{1}{2}\Big(\frac{1}{\gamma}\| A\|_{F}^{2}+1\Big)\kappa(A)^{-2}\Big]^{2},
\end{equation}
and Algorithm \ref{alg41} can converge faster than the greedy randomized
Kaczmarz method (GRK).
On the other hand, if $\theta=1$, we have from Theorem \ref{Thm2.3} and \eqref{311} that
\begin{equation}
\Big( 1-\frac{\left \| A \right \|_{F}^{2}}{\gamma }\kappa(A)^{-2} \Big)\Big(1-\frac{\left \| A \right \|_{F}^{2}}{\xi}\kappa(A)^{-2} \Big)\leq\Big[1-\frac{1}{\gamma}\| A\|_{F}^{2}\kappa(A)^{-2}\Big]^{2}.
\end{equation}
Thus, our bound \eqref{311} is sharper than \eqref{299} as $\theta=1$.
\end{remark}

\subsection{Random sampling for the partially randomized Kaczmarz method}

We have to scan all the rows of $A$ and calculate the probabilities corresponding to the residuals in GRK \cite{bibitem2} and RGRK \cite{bibitem3}. As a comparison, one has to seek the row with the largest (relative) residual in all the $m$ rows. Thus, it is time-consuming to determine the working rows when the size of the matrix is very large in the three algorithms.


In this section, we regard the selection of rows as random sampling, and do a simple random sampling before updating the approximate solution. The key is to use only a small portion of rows as samples, and then select working rows from the samples according to probabilities. Indeed, this idea stems from
Chebyshev's law of large numbers \cite{bibitem13}:
\begin{theorem}\label{Thm3.3} \cite{bibitem13}
Suppose that ${ z}_1,{ z}_2,\ldots,{ z}_n,\ldots$ is a series of independent random variables. They have expectation $\mathbb{E}({ z}_k)$ and variance respectively $\mathbb{D}({ z}_k)$. If there is s constant $C$ such that $\mathbb{D}({\bf x}_k)\leq C$, for any small positive number $\varepsilon$, we have
\begin{equation}
\lim_{n \to\infty  }P=\left \{ \left | \frac{1}{n} \sum_{k=1}^{n}{ z}_k-\frac{1}{n} \sum_{k=1}^{n}\mathbb{E}({ z}_k)\right |< \epsilon,\quad\forall\varepsilon>0  \right  \}=1.
\end{equation}
\end{theorem}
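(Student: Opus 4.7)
The plan is to reduce the statement to Chebyshev's inequality applied to the sample mean $S_n := \frac{1}{n}\sum_{k=1}^{n} z_k$. The only ingredients needed are the linearity of expectation, the additivity of variance under independence, and the elementary Markov/Chebyshev tail bound. No probabilistic structure beyond pairwise independence and a uniform variance bound is actually used, so the argument is a clean three-line computation once the quantities are properly identified.

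First I would compute $\mathbb{E}(S_n)$ and $\mathbb{D}(S_n)$. By linearity, $\mathbb{E}(S_n) = \frac{1}{n}\sum_{k=1}^{n}\mathbb{E}(z_k)$, which is exactly the centering term appearing inside the probability in the statement. For the variance, I would invoke independence of the $z_k$'s to write
\begin{equation*}
\mathbb{D}(S_n) \;=\; \frac{1}{n^{2}}\sum_{k=1}^{n}\mathbb{D}(z_k) \;\leq\; \frac{1}{n^{2}}\cdot nC \;=\; \frac{C}{n},
\end{equation*}
using the hypothesis $\mathbb{D}(z_k)\leq C$ for every $k$. This is the step where the independence assumption is essential; without it one would get cross-covariance terms and the bound would fail in general.

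Next I would apply Chebyshev's inequality to the random variable $S_n - \mathbb{E}(S_n)$: for any fixed $\varepsilon > 0$,
\begin{equation*}
P\Bigl(\bigl|S_n - \mathbb{E}(S_n)\bigr| \geq \varepsilon\Bigr) \;\leq\; \frac{\mathbb{D}(S_n)}{\varepsilon^{2}} \;\leq\; \frac{C}{n\,\varepsilon^{2}}.
\end{equation*}
Passing to complements yields $P(|S_n - \mathbb{E}(S_n)| < \varepsilon) \geq 1 - \frac{C}{n\varepsilon^{2}}$. Since $\varepsilon$ is fixed and $C$ is a universal constant, letting $n\to\infty$ sends the right-hand side to $1$, and being a probability it is bounded above by $1$, so the limit equals $1$, which is precisely the conclusion.

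Honestly, there is no substantive obstacle here: the only thing to be careful about is the (slightly unusual) notation in the statement, which mixes $\varepsilon$ and $\epsilon$ and writes ``$\lim P = \{\cdots\} = 1$'' in a somewhat informal way. I would interpret this as the standard assertion $\lim_{n\to\infty} P(|S_n - \mathbb{E}(S_n)| < \varepsilon) = 1$ for every fixed $\varepsilon > 0$, and present the argument in that form so that the Chebyshev inequality step is unambiguous. One could also note in passing that the hypothesis can be weakened to $\frac{1}{n^{2}}\sum_{k=1}^{n}\mathbb{D}(z_k) \to 0$, but since the paper only needs the stated version to justify the sampling heuristic in the next subsection, I would keep the proof to the minimal three-step form above.
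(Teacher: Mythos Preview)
Your proof is correct and is the standard textbook argument: compute the mean and variance of the sample average, bound the variance by $C/n$ using independence and the uniform variance hypothesis, apply Chebyshev's inequality, and pass to the limit. There is nothing to criticize mathematically, and your remark about interpreting the slightly informal notation in the statement is well taken.

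As for comparison with the paper: the paper does \emph{not} prove this theorem at all. It is stated as a quotation from the reference \cite{bibitem13} (a probability textbook) and used as a black box to motivate the sampling heuristic that follows. So strictly speaking there is no ``paper's own proof'' to compare against; your argument is exactly the classical one that would appear in the cited source, and it is entirely appropriate here.
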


Theorem \ref{Thm3.3} indicates that if the sample size is large enough, the sample mean will approach to the population mean. Thus, the idea is to take a few rows into account, with no need to use all the rows.
On the other hand, according to Bernoulli's law of large numbers \cite{bibitem13}, when the number of experiments is large enough, the frequency of selecting each row is stable to its corresponding probability. Hence, the scheme of estimating the whole with the part is reasonable.

Furthermore, in order to avoid unreasonable sampling caused by randomness of simple sampling, we use ``$Z$ test" \cite{bibitem13} to evaluate the results of each random sampling. More precisely, given $0<\eta\ll 1$ and let $\{\omega_{1},\omega_{2},\ldots,\omega_{\eta m}\}$ be $\eta m$ simple random samples from the population with normal distribution $N\left(\mu,\sigma^2\right)$.
We assume that $\eta m$ is not too small, such that the sampling satisfies the Bernoulli's law of large numbers. For instance, we can set $\eta$ to be some empirical values such as 5\% \cite{bibitem13}.

The significant difference between the samples and the population can be judged by comparing the ``sample" Z-score
\begin{equation}\label{eq320}
Z=\frac{\overline{\bf \omega}-\mu }{s/\sqrt{\eta m}}
\end{equation}
with a ``theoretical" Z-score $q$ selected under the current distribution, where $\mu$ is the population mean, $\overline{\bf \omega}$ is the sample mean, and $s$ is the sample standard deviation. The choice of $q$ depends on the distribution of samples. For instance, if we choose $q=1.96$ under the normal distribution, the occurrence probability of significant difference will be no more than $5\%$ \cite{bibitem13}.
By \eqref{eq320}, if we set
$$
\mu=\frac{\sum_{i=1}^{m}\left \| A_{(i)} \right \|^{2}_{2}}{m},\quad \overline{\bf \omega}=\frac{\sum_{i=1}^{\eta m}\left \| \omega_{(i)} \right \|^{2}_{2}}{\eta m},\quad{\rm and}\quad s=\sqrt{\frac{\sum_{i=1}^{\eta m}\left(\left \| \omega_{(i)} \right \|^{2}_{2}-\overline{\bf \omega}\right)^2}{\eta m}},
$$
then the sample Z-score can be easily computed. If $Z<q$, we can accept the sampling, otherwise, we have to resample the population.
Similar to \eqref{399}, let the selected set be ${\Omega_k}$, in the proposed method, the probability of choosing the $ik$-th row is 1:
\begin{equation}\label{40}
\frac{|{\bf r}_{(ik)}|}{\| A_{(ik)}\|_{2}}=\frac{\left | {{\bf{b}}_{\left ( ik \right )}-A_{\left ( ik \right )}{\bf x}_{k}} \right |}{\left \| A_{\left ( ik \right )} \right \|_{2}}=\max\limits_{jk\in\Omega_k}\left \{ \frac{\left | {{\bf{b}}_{\left ( jk \right )}-A_{\left ( jk \right )}{\bf x}_{k}} \right |}{\left \| A_{\left ( jk \right )} \right \|_{2}}\right \}.
\end{equation}

We are ready to present the main algorithm of this paper. Notice tht we seek the working rows in a much smaller set $\Omega_k$ rather than all the $m$ rows, the new algorithm can not only reduce the workload in each step, but also save the storage requirements.
\begin{algorithm}\label{alg5}
{\bf A partially randomized Kaczmarz method with simple random sampling~(PRKS)}\\
{{\bf Input:} $A$, $\bf b$, ${\bf x}_0$, two parameters $\eta,q$, and the maximal iteration number $l$;}\\
{{\bf Output:} The approximate solution $\widetilde{\bf x}$;}\\
{\bf 1}. Compute the population mean $\mu=\frac{\sum_{i=1}^{m}\left \| A_{(i)} \right \|^2}{m}$;\\
{\bf 2}. for $k=0, 1, \ldots, l-1$ do
\begin{flushleft}
{\bf 3}. while $Z_k\geq q$ do\\
\quad \quad Randomly select $\eta m$ rows as samples, and calculate
\begin{equation*}
Z_k=\frac{\overline{\omega}_{k}-\mu }{s_{k}/\sqrt{\eta m}}
\end{equation*}
~~end\\
\end{flushleft}
{\bf 4}. Let the selected set be ${\Omega_k}$, and select $ik$ according to \eqref{40};
\\
{\bf 5}. Let ${\bf x}_{k+1}={\bf x}_k+\frac{{\bf b}_{\left ( ik \right )}-A_{\left ( ik \right )}{\bf x}_k}{\left \| A_{\left ( ik \right )} \right \|_{2}^{2}}\left ( A_{\left ( ik \right )} \right )^{H}$. If ${\bf x}_{k+1}$ is accurate enough, then stop, else continue; \\
{\bf 6}. endfor
\end{algorithm}



%
%

Next we give insight into the convergence of Algorithm \ref{alg5}.
Suppose that the row set of simple random sampling is $\Omega_k$, with number of samples being $\eta m$. Similar to \eqref{3.7}, we have that
\begin{align}
\mathbb{E}_{k}\left \|{\bf x}_{k+1}-{\bf x} \right \|_{2}^{2}&=\left \| {\bf x}_{k}-{\bf x} \right \|_{2}^{2}-\mathbb{E}\left \| {\bf x}_{k+1}-{\bf x}_{k} \right \|_{2}^{2}\nonumber\\
&=\left \| {\bf x}_{k}-{\bf x} \right \|_{2}^{2}-\max\limits_{jk\in\Omega_k}\left \{ \frac{\left | {{\bf{b}}_{\left ( jk \right )}-A_{\left ( jk \right )}{\bf x}_{k}} \right |^{2}}{\left \| A_{\left ( jk \right )} \right \|_{2}^{2}}\right \}\nonumber\\
&\leq\left \| {\bf x}_{k}-{\bf x} \right \|_{2}^{2}-\frac{\sum\limits_{jk\in\Omega_k}\left| {\bf r}_{jk} \right|^{2}}{\sum\limits_{jk\in\Omega_k}{\left\|A_{\left(jk\right)}\right\|_2^2}} \nonumber\\
&=\left \| {\bf x}_{k}-{\bf x} \right \|_{2}^{2}-\frac{\frac{1}{\eta m}\sum\limits_{jk\in\Omega_k}\left| {\bf r}_{jk} \right|^{2}}{\frac{1}{\eta m}\sum\limits_{jk\in\Omega_k}{\left\|A_{\left(jk\right)}\right\|_2^2}}.\label{3.30}
\end{align}
On the other hand, we have from \eqref{3.8} that
\begin{align}
\max_{1\leq i\leq m}\left \{ \frac{\left | {{\bf{b}}_{\left ( jk \right )}-A_{\left ( jk \right )}{\bf x}_{k}} \right |^{2}}{\left \| A_{\left ( jk \right )} \right \|_{2}^{2}}\right \}&=\frac{\max\limits_{1\leq i\leq m}\left \{ \frac{\left | {{\bf{b}}_{\left ( jk \right )}-A_{\left ( jk \right )}{\bf x}_{k}} \right |^{2}}{\left \| A_{\left ( jk \right )} \right \|_{2}^{2}}\right \}}{\left \| {\bf r}_{k} \right \|_{2}^{2}}\left \| {\bf r}_{k} \right \|_{2}^{2}\nonumber\\
&\geq \frac{\left \| {\bf r}_{k} \right \|_{2}^{2}}{\sum_{jk=1, jk\neq jk-1}^{m}\left \| A_{jk} \right \|_{2}^{2}}\nonumber\\
&=\frac{\frac{1}{m-1}\left \| {\bf r}_{k} \right \|_{2}^{2}}{\frac{1}{m-1}\left(\sum_{jk=1, jk\neq jk-1}^{m}\left \| A_{jk} \right \|_{2}^{2}\right)}.\label{3.44}
\end{align}

According to Chebyshev's law of large numbers \cite{bibitem13}, when $m$ is large enough and $\eta m$ is sufficiently large, there is a scalr $0<\varepsilon_k\ll 1$, such that \footnote{For instance, according to general statistical experience, the sampling error with respect to $\eta m=1200$ samples is about $\pm3\%$: https://baike.baidu.com/item/confidence level.}
\begin{equation}
\frac{\sum\limits_{jk\in\Omega_{k}}\left| {\bf r}_{jk} \right|^{2}}{\eta m}=\frac{\left \| {\bf r}_{k} \right \|_{2}^{2}}{m-1}\left(1\pm{\varepsilon_k}\right)\label{3.41},
\end{equation}
Similarly, in terms of Chebyshev’s law of large numbers \cite{bibitem13}, when $m$ is large enough and $\eta m$ is sufficiently large, there is a scalr $0<\widetilde{\varepsilon}_k\ll 1$, such that
\begin{equation}
\frac{\sum\limits_{jk\in\Omega_k}{\left\|A_{\left(jk\right)}\right\|_2^2}}{\eta m}=\frac{\sum\limits_{jk=1, jk\neq jk-1}^{m}\left \| A_{jk} \right \|_{2}^{2}}{m-1}\left(1\pm{\widetilde{\varepsilon}_k}\right)\label{3.43}.
\end{equation}
Let $\gamma$ and $\xi$ be defined in \eqref{eqq312}.
Combining \eqref {3.41}, \eqref{3.43} and \eqref{3.30}, we arrive at
\begin{align}
\mathbb{E}_{k}\left \|{\bf x}_{k+1}-{\bf x} \right \|_{2}^{2}&=\left \| {\bf x}_{k}-{\bf x} \right \|_{2}^{2}-\mathbb{E}\left \| {\bf x}_{k+1}-{\bf x}_{k} \right \|_{2}^{2}\nonumber\\
&\leq\left \| {\bf x}_{k}-{\bf x} \right \|_{2}^{2}-\frac{\frac{1}{\eta m}\sum\limits_{jk\in\Omega_k}\left| {\bf r}_{jk} \right|^{2}}{\frac{1}{\eta m}\sum\limits_{jk\in\Omega_k}{\left\|A_{\left(jk\right)}\right\|_2^2}}
\nonumber\\
&=\left\| {\bf x}_{k}-{\bf x} \right\|_{2}^{2}-\frac{\frac{1\pm\varepsilon_k}{m-1}\left\| {\bf r}_{k} \right\|_{2}^{2}}{\frac{1\pm\widetilde{\varepsilon}_{k}}{m-1}\left(\sum_{jk=1, jk\neq jk-1}^{m}\left\|A_{jk} \right\|_{2}^{2}\right)}\nonumber\\
&\leq\left\| {\bf x}_{k}-{\bf x} \right\|_{2}^{2}-\frac{\left(1-\varepsilon_k\right)\lambda_{\min}\left (A^{H}A\right )}{\left(1+\widetilde{\varepsilon}_k\right)\gamma }\left \| {\bf x}_{k}-{{\bf x}} \right \|_{2}^{2}\nonumber\\
&= \left ( 1-\frac{\left(1-{\varepsilon}_{k}\right)\left \| A \right \|_{F}^{2}}{\left(1+\widetilde{\varepsilon}_{k}\right)\gamma }\kappa \left ( A \right )^{-2}\right )  \left \| {\bf x}_{k}-{\bf x} \right \|_{2}^{2}.\label{3.45}
\end{align}

Similar to \eqref{3.11}, there exist $0\leq \varepsilon_{k-1},\widetilde{\varepsilon}_{k-1}\ll 1$, such that
\begin{align}
\mathbb{E}_{k-1}\left \|{\bf x}_{k}-{\bf x} \right \|_{2}^{2}&=\left \| {\bf x}_{k-1}-{\bf x} \right \|_{2}^{2}-\frac{\frac{1\pm\varepsilon_{k-1}}{m-1}\left({\bf r}_{(ik-1)}^2\right)}{\frac{1\pm\widetilde{\varepsilon}_{k-1}}{m-1}\left(\sum_{jk-1=1, jk-1\neq jk-2}^{m}\big(\left\| A \right\|_{F}^{2}-\left\| A_{(jk-1)} \right\|_{2}^{2}\big)\right)}\nonumber\\
&\leq \left \| {\bf x}_{k-1}-{\bf x} \right \|_{2}^{2}-\frac{\left(1-\varepsilon_{k-1}\right)\lambda_{\min}\left (A^{H}A\right )}{\left(1+\widetilde{\varepsilon}_{k-1}\right)\xi }\left \| {\bf x}_{k-1}-{{\bf x}} \right \|_{2}^{2}\nonumber\\
&= \left ( 1-\frac{\left(1-{\varepsilon}_{k-1}\right)\left \| A \right \|_{F}^{2}}{\left(1+\widetilde{\varepsilon}_{k-1}\right)\xi }\kappa \left ( A \right )^{-2}\right )  \left \| {\bf x}_{k-1}-{\bf x} \right \|_{2}^{2}.\label{3.46}
\end{align}

From \eqref{3.45} and \eqref{3.46}, we obtain the following theorem on the convergence of Algorithm \ref{alg5}.
\begin{theorem}\label{Thm3.4}
Under the above assumptions and notations, Algorithm \ref{alg5} converges to ${\bf x}$ in expectation, with
{\small\begin{eqnarray*}\label{3.23}
\mathbb{E}_{k}\left\| {\bf x}_{k+1}-{\bf x} \right\|_{2}^{2}\leq \left[\left ( 1-\frac{\left(1-{\varepsilon}_{k}\right)\left \| A \right \|_{F}^{2}}{\left(1+\widetilde{\varepsilon}_{k}\right)\gamma }\kappa \left ( A \right )^{-2}\right ) \left ( 1-\frac{\left(1-{\varepsilon}_{k-1}\right)\left \| A \right \|_{F}^{2}}{\left(1+\widetilde{\varepsilon}_{k-1}\right)\xi }\kappa \left ( A \right )^{-2}\right ) \right]\left\| {\bf x}_{k-1}-{\bf x} \right\|_{2}^{2}.
\end{eqnarray*}
}
\end{theorem}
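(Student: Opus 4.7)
My plan is to assemble the theorem from two single-step contraction bounds at iterations $k$ and $k-1$ and then chain them via the tower property of conditional expectation. Both single-step bounds follow by adapting the deterministic PRK analysis of Theorem \ref{Thm3.2} to the sampled setting; the novelty is replacing the population sums over $\{1,\dots,m\}$ with the sample sums over $\Omega_k$ and controlling the discrepancy via Chebyshev's law of large numbers (Theorem \ref{Thm3.3}).

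First, I would note that the PRKS update is still the standard Kaczmarz projection, so the Pythagorean identity \eqref{3.1} holds. Since the chosen row $ik$ maximizes $|{\bf r}_{(jk)}|/\|A_{(jk)}\|_2$ over $\Omega_k$ with probability one (by the same $t\to\infty$ limiting argument as in \eqref{399}, restricted to the sampled set), taking $\mathbb{E}_k$ yields
\[
\mathbb{E}_k\|{\bf x}_{k+1}-{\bf x}\|_2^2 = \|{\bf x}_k-{\bf x}\|_2^2 - \max_{jk\in\Omega_k}\frac{|{\bf r}_{(jk)}|^2}{\|A_{(jk)}\|_2^2}.
\]
I would then lower-bound this maximum by the ratio of the sums over $\Omega_k$ via the elementary inequality $\max_i a_i/b_i \geq (\sum a_i)/(\sum b_i)$ (already used in \eqref{3.8}), and invoke Chebyshev's law twice to replace $\tfrac{1}{\eta m}\sum_{\Omega_k}|{\bf r}_{(jk)}|^2$ by $\tfrac{1\pm\varepsilon_k}{m-1}\|{\bf r}_k\|_2^2$ and $\tfrac{1}{\eta m}\sum_{\Omega_k}\|A_{(jk)}\|_2^2$ by $\tfrac{1\pm\widetilde\varepsilon_k}{m-1}\sum_{jk\neq ik-1}\|A_{(jk)}\|_2^2$. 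The $(m-1)$ normalization and exclusion of $ik-1$ are justified by \eqref{eq313}, which shows that the previously projected row has zero residual and therefore cannot be the $\Omega_k$-maximizer. Combining with $\|{\bf r}_k\|_2^2 \geq \lambda_{\min}(A^HA)\|{\bf x}_k-{\bf x}\|_2^2$ and the definition of $\gamma$ in \eqref{eqq312} produces the first factor.

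Next, I would repeat this single-step analysis at iteration $k-1$. The only difference is that the excluded row (namely $ik-2$) contributes at least $\|A_{(s)}\|_2^2$ to the denominator that is being subtracted from $\|A\|_F^2$, where $A_{(s)}$ is the second-smallest row in 2-norm; this gives $\xi$ as in \eqref{eqq312} as a uniform upper bound on the remaining row-norm sum and hence the second factor $\bigl(1-\tfrac{(1-\varepsilon_{k-1})\|A\|_F^2}{(1+\widetilde\varepsilon_{k-1})\xi}\kappa(A)^{-2}\bigr)$. Finally, applying the tower property $\mathbb{E}_{k-1}[\mathbb{E}_k[\,\cdot\,]] = \mathbb{E}_{k-1}[\,\cdot\,]$ to compose the two bounds yields the stated two-step contraction.

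The main obstacle is giving the Chebyshev step a rigorous meaning. Theorem \ref{Thm3.3} only guarantees convergence in probability of the sample mean to the population mean, not a deterministic error bound with a prescribed $\varepsilon_k$. The natural reading is that $\varepsilon_k,\widetilde\varepsilon_k$ are small positive numbers that hold with high probability once the Z-test in Step 3 of Algorithm \ref{alg5} accepts the sample; the inner \texttt{while}-loop enforces this acceptance. A fully rigorous argument would require conditioning on the Z-test acceptance event and tracking how the adaptive resampling interacts with the conditional expectations $\mathbb{E}_k$ and $\mathbb{E}_{k-1}$, and also checking that the discrete row-norm distribution is close enough to normal for the Z-score threshold $q$ to carry the interpretation used here.
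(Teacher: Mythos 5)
Your proposal follows essentially the same route as the paper's own derivation: the Pythagorean identity, the elementary bound $\max_j (a_j/b_j)\ge \bigl(\sum_j a_j\bigr)/\bigl(\sum_j b_j\bigr)$ applied over the sampled set $\Omega_k$, Chebyshev's law of large numbers to trade the sample sums for population sums up to factors $(1\pm\varepsilon_k)$ and $(1\pm\widetilde{\varepsilon}_k)$, and composition of the two single-step contractions at iterations $k$ and $k-1$ with the constants $\gamma$ and $\xi$. Your closing caveat about the non-rigorous status of the Chebyshev step (convergence in probability versus a deterministic $\varepsilon_k$-bound, and the interaction of the Z-test resampling with the conditional expectations) is a fair criticism, but it applies equally to the paper's own passage from \eqref{3.41} and \eqref{3.43} to the stated theorem.
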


It is seen from Theorem \ref{Thm3.4} and Theorem \ref{Thm3.2} that
the convergence speed of Algorithm \ref{alg5} can be slightly slow than Algorithm \ref{alg41}, i.e., the former may need more iterations than the latter. This is due to the fact that only a few rows of $A$ are utilized to choose the working rows.
In Figure 3.3, we plot the convergence curves of Algorithm \ref{alg5} with $\eta=0.5,0.2,0.05$, and those of Algorithm \ref{alg41} (i.e., Algorithm \ref{alg5} with $\eta=1$) and GRK. Here the coefficient matrix is randomly generated by using the MATLAB function $A=randn(200000,50)$.
We observe that the smaller $\eta$ is, the more iterations Algorithm \ref{alg5} uses. However, all of them converge faster than GRK.
On the other hand, the overhead in each step of Algorithm \ref{alg5} is much less than that of Algorithm \ref{alg41}.
As a result, Algorithm \ref{alg5} can run much faster than Algorithm \ref{alg41}. One refers to see Section \ref{sec5} for a comparison of Algorithm \ref{alg41} and Algorithm \ref{alg5}.


%
%


\begin{figure}[H]\label{Fig33}
\centering
\includegraphics[width =100 mm, height =60 mm]{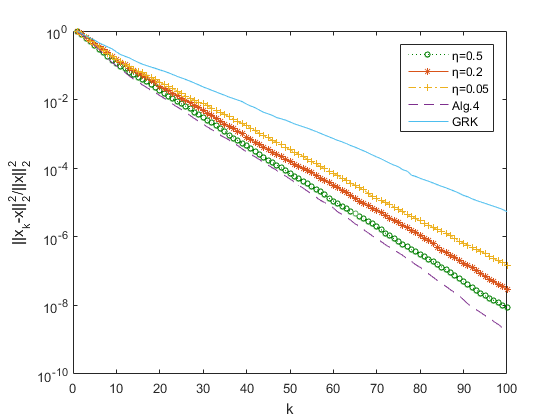}
\caption{Convergence curves of Algorithm \ref{alg5} with $\eta=0.5,0.2,0.05$, and those of Algorithm \ref{alg41} and GRK. The coefficient matrix is randomly generated by using the MATLAB function $A=randn(200000,50)$.}
\label{fig:3}
\end{figure}



\section{A Partially Kaczmarz Method with Simple Random Sampling for Ridge Regression}
\setcounter{equation}{0}

In this section, we are interested in the following ridge regression or the least squares regression problem
\begin{equation}\label{41}
\underset{\bf x}{\min}\left \| A\bf{x}-\bf{b} \right \|_2+\tau \left \| \bf{x} \right \|_2,
\end{equation}
where $A$ is an $m\times n$ (complex) matrix, ${\bf b}$ is a given $m$-dimensional (complex) vector, $\tau>0$ is a given positive parameter, and ${\bf x}$ is the desired solution vector.
This type of problem arises from many practical problems such as machine learning \cite{Zhang}, statistical analysis \cite{Haw}, ill-posed problem \cite{Tik,Tik2}, and so on \cite{Had,bibitem16}.

By taking derivative with respect to ${\bf x}$, the above optimization problem can be computed via solving the following linear systems
\begin{equation}\label{42}
\left (K +\tau I_{m}  \right )y={\bf{b}},\quad K =A A^{H},
\end{equation}
or
\begin{equation}\label{411}
\left (\Sigma +\tau I_{n}  \right )x=A^{H}{\bf{b}},\quad {\Sigma} =A^{H}A.
\end{equation}

In \cite{Hef}, Hefny {\it et al.} give a variant of randomized Kaczmarz (VRK) algorithm and a variant of randomized Gauss-Seidel (VRGS) algorithm to solve \eqref{42} and \eqref{411}, respectively.
In \cite{bibitem16}, Ivanov and Zhdanov solve the problem of \eqref{41} by applying the randomized Kaczmarz algorithm to a regularized normal equation of the form \eqref{411}.
Recently, Gu and Liu \cite{bibitem14} extend the GRK algorithm to solve \eqref{42}, and present a variant of greedy randomized Kaczmarz algorithm with relaxation parameter \big(VGRKRP($\omega$)\big). In addition, an accelerated GRK algorithm with relaxation for ridge regression \big(AVGRKRP($\omega$)\big) is proposed, by executing more rows that corresponding to the larger entries of the residual vector simultaneously at each iteration.

However, in all the algorithms proposed in \cite{bibitem14}, one has to explicitly compute and store the $m$-by-$m$ matrix $K$, which is unfavorable or even infeasible for large scale problems. Moreover,
the {\it optimal} relaxation parameters required in VGRKRP($\omega$) and AVGRKRP($\omega$) are difficult to determine in advance. In this section, we solve the ridge regression problem \eqref{41} by applying our proposed algorithms to \eqref{42}, with no need to form the matrix $K$ explicitly, and our algorithms are free of choosing relaxation parameters.
In terms of \eqref{39}, we select the working row number $ik$ such that 
\begin{equation}\label{44}
\frac{\Big|{\bf{b}}_{\left ( ik \right )}-\big(A_{(ik)}\cdot A^H+\tau I_{(ik)}\big){\bf x}_k\Big|}{\|A_{(ik)}\cdot A^H+\tau I_{(ik)}\|_{2}}=\max\limits_{1\leq jk\leq m}\left \{ \frac{\Big|{\bf{b}}_{\left ( jk \right )}-\big(A_{(jk)}\cdot A^H+\tau I_{(jk)}\big){\bf x}_k\Big|}{\|A_{(jk)}\cdot A^H+\tau I_{(jk)}\|_{2}}\right \}.
\end{equation}


Based on Algorithm \ref{alg41} and the above discussions, we have the following algorithm.
\begin{algorithm}\label{alg9}
{\bf A partially randomized Kaczmarz method for ridge regression problems}\\
{{\bf Input:} $A$, $\bf b$, $l$ and $\bf{x_0}$, where $l$ is the maximal iteration number;}\\
{{\bf Output:}  The approximate solution $\widetilde{\bf x}$;}\\
{\bf 1}.  for $k=0, 1, \ldots, l-1$ do\\
{\bf 2}. Select the working row number $ik$ as in \eqref{44}; 
\\
{\bf 3}. Let ${\bf x}_{k+1}={\bf x}_k+\frac{{\bf{b}}_{\left ( ik \right )}-\big(A_{(ik)}\cdot A^H+\tau I_{(ik)}\big){\bf x}_k}{{\bf y}_{(ik)}^{2}}\big(A_{(ik)}\cdot A^H+\tau I_{(ik)}\big)^{H}$. If ${\bf x}_{k+1}$ is accurate enough, then stop, else continue;\\
{\bf 4.} endfor
\end{algorithm}

In each iteration, the main overhead is to compute two matrix-vector products with respect to $A$ and $A^H$, and there is no need to form and store the $m$-by-$m$ matrix $AA^{H}$. So Algorithm \ref{alg9} is much cheaper and requires fewer storage than the algorithms presented in \cite{bibitem14}.
Similar to Theorem \ref{Thm3.2}, we have the following algorithm for the convergence of Algorithm \ref{alg9}.

%
%
%
\begin{theorem}
Let ${\bf x}$ be the solution of \eqref{42} and let $\left( K+\tau I_{m} \right)_{\left(s\right)}$ be the second smallest row of $K+\tau I_{m}$ in 2-norm. Then Algorithm \ref{alg9} converges to ${\bf x}$ in expectation, with
{\small
$$
\mathbb{E}_{k}\left \| {\bf x}_{k+1}-{\bf x} \right \|_{2}^{2}\leq \left[\left( 1-\frac{\left \| \left( K+\tau I_{m} \right) \right \|_{F}^{2}}{\nu}\kappa^{-2} \left ( K+\tau I_{m} \right ) \right)\left( 1-\frac{\left \| \left ( K+\tau I_{m} \right ) \right \|_{F}^{2}}{\rho}\kappa^{-2} \left ( K+\tau I_{m} \right ) \right)\right]\left \| {\bf x}_{k-1}-{\bf x} \right \|_{2}^{2},
$$
}
where
\begin{equation}\label{445}
\nu =\underset{1\leq i\leq m}{\max}\sum_{j=1,j\neq i}^{m}\|\left( K+\tau I_{m} \right)_{\left(j\right)}\|_2,~~{\rm and}~~\rho=\left\|  \left( K+\tau I_{m} \right) \right\|_{F}^{2}-\left\|  \left( K+\tau I_{m} \right)_{\left(s\right) }\right\|_{2}^{2}.
\end{equation}
\end{theorem}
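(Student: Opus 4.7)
The plan is to recognize that Algorithm \ref{alg9} is nothing but Algorithm \ref{alg41} (the PRK method) applied to the $m\times m$ linear system \eqref{42}, and then invoke Theorem \ref{Thm3.2} with the coefficient matrix relabeled. Let $M=K+\tau I_m=AA^H+\tau I_m$ and denote its $i$-th row by $M_{(i)}=A_{(i)}\cdot A^H+\tau I_{(i)}$. First I would verify that the update formula in step 3 of Algorithm \ref{alg9} is exactly the Kaczmarz projection for the system $M{\bf x}={\bf b}$, namely
$$
{\bf x}_{k+1}={\bf x}_k+\frac{{\bf b}_{(ik)}-M_{(ik)}{\bf x}_k}{\|M_{(ik)}\|_2^2}\,M_{(ik)}^H,
$$
and that the selection rule \eqref{44} coincides with the maximum-relative-residual rule \eqref{39} for this same system. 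Since $\tau>0$ and $K$ is Hermitian positive semidefinite, $M$ is Hermitian positive definite, so it has full rank, $\kappa(M)$ is well-defined, and \eqref{42} possesses a unique solution ${\bf x}$.

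Once this identification is in place, I would apply Theorem \ref{Thm3.2} verbatim with $A$ replaced by $M$. Going through its proof in this new context: the orthogonality identity \eqref{31} becomes $M_{(ik)}({\bf x}_{k+1}-{\bf x})=0$, the Pythagoras-type equality $\|{\bf x}_{k+1}-{\bf x}\|_2^2=\|{\bf x}_k-{\bf x}\|_2^2-\|{\bf x}_{k+1}-{\bf x}_k\|_2^2$ is inherited unchanged, and the lower bound $\|{\bf r}_k\|_2^2\ge\lambda_{\min}(M^H M)\|{\bf x}_k-{\bf x}\|_2^2$ together with $\lambda_{\min}(M^HM)/\|M\|_F^2=\kappa^{-2}(M)$ produces the one-step contraction factor $1-\|M\|_F^2\kappa^{-2}(M)/\nu$ exactly as in \eqref{3.10}.

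Next I would reproduce the second-row-different argument from the proof of Theorem \ref{Thm3.2}: because ${\bf r}_{(ik)}=0$ immediately after the projection onto $M_{(ik)}$, the selection rule \eqref{44} cannot pick the same index $ik$ in the following iteration. Restricting the denominator in the max-over-ratios bound to the remaining rows then replaces $\|M\|_F^2=\sum_{j=1}^m\|M_{(j)}\|_2^2$ by $\|M\|_F^2-\|M_{(s)}\|_2^2=\rho$, yielding the companion contraction factor $1-\|M\|_F^2\kappa^{-2}(M)/\rho$ in analogy with \eqref{3.11}. Multiplying the two consecutive one-step bounds produces the claimed two-step contraction with constants $\nu$ and $\rho$ defined in \eqref{445}.

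Honestly, there is no substantive obstacle: the only point requiring care is bookkeeping, i.e., checking that every quantity appearing in Theorem \ref{Thm3.2} has an unambiguous counterpart for $M$ (in particular that $\gamma\leftrightarrow\nu$ uses row 2-norms of $M$ and that $\xi\leftrightarrow\rho$ uses the second-smallest row of $M$), and that the update denominator written as ``${\bf y}_{(ik)}^2$'' in step 3 of Algorithm \ref{alg9} is interpreted as $\|M_{(ik)}\|_2^2$ so that the standard Kaczmarz analysis applies. With these identifications the conclusion is immediate from Theorem \ref{Thm3.2}.
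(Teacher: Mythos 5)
Your proposal is correct and is exactly the route the paper intends: the paper states this result with the proof omitted, remarking only that it is ``similar to Theorem \ref{Thm3.2},'' i.e., one applies the PRK analysis verbatim to the square system $(K+\tau I_m){\bf x}={\bf b}$ with $\gamma\leftrightarrow\nu$ and $\xi\leftrightarrow\rho$, which is precisely your argument. Your bookkeeping remarks are apt --- in particular, for the correspondence $\gamma\leftrightarrow\nu$ to hold, the row norms in \eqref{445} should be squared, matching \eqref{eqq312}.
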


\begin{remark}
We have to calculate the 2-norms of all the rows $\|A_{(jk)}\cdot A^H+\tau I_{(jk)}\|_{2},jk=1,2,\ldots,m$, which requires $m$ matrix-vector products and thus will be time-consuming. In order to save computational overhead,
we consider the two nonnegative vectors as follows
$$
{\bf y}_{1}={\rm abs}\left (( {\bf{e}}^{H}A)A^{H})+\tau{\bf e}^{H} \right)\quad {\rm and}\quad {\bf y}_{2}=\left (( {\bf{e}}^{H}{\rm abs}\left ( A \right )){\rm abs}( A^{H})+\tau{\bf e}^{H}\right ),
$$
where ${\bf e}\in\mathbb{R}^{m}$ is the vector of all ones. Let ${\bf z}\in\mathbb{R}^m$ be the vector composed of the 2-norms of the $m$ rows of $AA^{H}+\tau I$, then both ${\bf z}$ and ${\bf y}_1$ are {\it elementwise} smaller than ${\bf y}_2$. The idea is to make use of
\begin{equation}\label{eq45}
{\bf y}=\frac{{\bf y}_{1}+{\bf y}_{2}}{2}
\end{equation}
to approximate ${\bf z}$, i.e., using ${\bf y}_{(ik)}$ as an estimation to $\|A_{(ik)}\cdot A^H+\tau I_{(ik)}\|_{2}$. Note that the vector ${\bf y}$ needs to compute only once and store for a latter use. More precisely, we exploit
\begin{equation}\label{47}
\frac{\Big|{\bf{b}}_{\left ( ik \right )}-\big(A_{(ik)}\cdot A^H+\tau I_{(ik)}\big){\bf x}_k\Big|}{{\bf y}_{(ik)}}=\max\limits_{1\leq jk\leq m}\left \{ \frac{\Big|{\bf{b}}_{\left ( jk \right )}-\big(A_{(jk)}\cdot A^H+\tau I_{(jk)}\big){\bf x}_k\Big|}{{\bf y}_{(jk)}}\right \}.
\end{equation}
to take the place of \eqref{44} in Step 4 of Algorithm \ref{alg9}.
Numerical experiments show that the estimation is very effective.
\end{remark}


However, one has to seek the working row corresponding to the largest (relative) residual element in magnitude among all the $m$ rows.
Similar to Algorithm \ref{alg5}, to release the overhead, we use only a small portion of rows as samples, and choose working rows by using simple random
sampling. This gives the following algorithm:


\begin{algorithm}\label{alg10}
{\bf A partially randomized Kaczmarz method with simple random
sampling for ridge regression}\\
{{\bf Input:} $A$, $\bf b$, $\tau$, $l$ and $\bf{x_0}$, where $l$ is the maximal iteration number;}\\
{{\bf Output:}  The approximate solution $\widetilde{\bf x}$;}\\
{\bf 1}. for $k=0, 1, \ldots, l-1$ do\\
{\bf 2}. Choose $\eta m$ rows as samples by using simple random
sampling;\\
{\bf 3}. Let the selected set be ${\Omega_k}$, and choose the working row number $ik$, such that
\begin{equation}\label{eq48}
\frac{\Big|{\bf{b}}_{\left ( ik \right )}-\big(A_{(ik)}\cdot A^H+\tau I_{(ik)}\big){\bf x}_k\Big|}{\|A_{(ik)}\cdot A^H+\tau I_{(ik)}\|_{2}}=\max\limits_{jk\in\Omega_k}\left \{ \frac{\Big|{\bf{b}}_{\left ( jk \right )}-\big(A_{(jk)}\cdot A^H+\tau I_{(jk)}\big){\bf x}_k\Big|}{\|A_{(jk)}\cdot A^H+\tau I_{(jk)}\|_{2}}\right \}.
\end{equation}
\\
{\bf 4}.  Let ${\bf x}_{k+1}={\bf x}_k+\frac{{\bf{b}}_{\left ( ik \right )}-\big(A_{(ik)}\cdot A^H+\tau I_{(ik)}\big){\bf x}_k}{\|A_{(ik)}\cdot A^H+\tau I_{(ik)}\|_{2}^{2}}\big(A_{(ik)}\cdot A^H+\tau I_{(ik)}\big)^{H}$. If ${\bf x}_{k+1}$ is accurate enough, then stop, else goto Step 2;\\
{\bf 5.} endfor
\end{algorithm}

The following theorem shows the convergence of Algorithm \ref{alg10}, whose proof is similar to that of Theorem \ref{Thm3.4} and thus is omitted:
{\begin{theorem}
Let ${\bf x}$ be the solution of \eqref{42}, then there exist $0<\widehat{\varepsilon}_k,\breve{\varepsilon}_k,\widehat{\varepsilon}_{k-1},\breve{\varepsilon}_{k-1}\ll 1$, such that 
{\small
$$
\mathbb{E}_{k}\left \| {\bf x}_{k+1}-{\bf x} \right \|_{2}^{2}\leq \chi \left\| {\bf x}_{k-1}-{\bf x} \right \|_{2}^{2},
$$
}
where
{\small
$$
\chi=\left[\left(1-\frac{\left({1-\widehat{\varepsilon}_k}\right)\left \| \left ( K+\tau I_{m} \right ) \right \|_{F}^{2}}{\left({1+\breve{\varepsilon}_k}\right)\nu}\kappa^{-2}\left ( K+\tau I_{m} \right ) \right)\left( 1-\frac{\left({1-\widehat{\varepsilon}_{k-1}}\right)\left \| \left ( K+\tau I_{m} \right ) \right \|_{F}^{2}}{\left({1+\breve{\varepsilon}_{k-1}}\right)\rho}\kappa^{-2}\left ( K+\tau I_{m} \right ) \right)\right],
$$
}
and $\nu,\rho$ are defined in \eqref{445}.
\end{theorem}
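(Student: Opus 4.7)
The plan is to mimic the proof of Theorem~\ref{Thm3.4} with the coefficient matrix $A$ replaced by $K+\tau I_m$ and to keep careful track of the sampling-induced fluctuation constants. First I would show that, exactly as in \eqref{31}--\eqref{3.1}, after the Kaczmarz-type update in Step 4 of Algorithm~\ref{alg10} one has
\begin{equation*}
\bigl\| \mathbf{x}_{k+1}-\mathbf{x}\bigr\|_{2}^{2}=\bigl\|\mathbf{x}_{k}-\mathbf{x}\bigr\|_{2}^{2}-\bigl\|\mathbf{x}_{k+1}-\mathbf{x}_{k}\bigr\|_{2}^{2},
\end{equation*}
since $(K+\tau I_m)\mathbf{x}=\mathbf{b}$ and the update is still an orthogonal projection onto the hyperplane determined by row $ik$ of $K+\tau I_m$. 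In particular, the residual component at the selected row is zero after the update, so that row cannot be chosen again at the next iteration; this is the analogue of \eqref{eq313} and it is what justifies excluding one index from the denominator sums.

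Next I would take conditional expectation with respect to the random sample $\Omega_{k}$ and, as in \eqref{3.30}, write
\begin{equation*}
\mathbb{E}_{k}\bigl\|\mathbf{x}_{k+1}-\mathbf{x}\bigr\|_{2}^{2}
\le \bigl\|\mathbf{x}_{k}-\mathbf{x}\bigr\|_{2}^{2}-\frac{\frac{1}{\eta m}\sum_{jk\in\Omega_k}|\mathbf{b}_{(jk)}-(K+\tau I_m)_{(jk)}\mathbf{x}_{k}|^{2}}{\frac{1}{\eta m}\sum_{jk\in\Omega_k}\|(K+\tau I_m)_{(jk)}\|_{2}^{2}},
\end{equation*}
which follows from the max-over-sum bound applied to the selection rule \eqref{eq48}. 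The key step is now to invoke Chebyshev's law of large numbers (Theorem~\ref{Thm3.3}) twice, once on the numerator and once on the denominator, to produce small deviations $\widehat{\varepsilon}_{k},\breve{\varepsilon}_{k}\in(0,1)$ such that the sample means equal the corresponding population means multiplied by $(1\pm\widehat{\varepsilon}_{k})$ and $(1\pm\breve{\varepsilon}_{k})$, respectively; this is the direct analogue of \eqref{3.41}--\eqref{3.43}. Bounding the resulting Rayleigh-type quotient by $\lambda_{\min}\bigl((K+\tau I_m)^H(K+\tau I_m)\bigr)/\nu$ and rewriting via $\kappa^{-2}(K+\tau I_m)$ yields
\begin{equation*}
\mathbb{E}_{k}\bigl\|\mathbf{x}_{k+1}-\mathbf{x}\bigr\|_{2}^{2}\le \left(1-\frac{(1-\widehat{\varepsilon}_{k})\|K+\tau I_m\|_{F}^{2}}{(1+\breve{\varepsilon}_{k})\nu}\kappa^{-2}(K+\tau I_m)\right)\bigl\|\mathbf{x}_{k}-\mathbf{x}\bigr\|_{2}^{2}.
\end{equation*}

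For the two-step bound I would repeat the above argument for the previous iteration, but this time use the fact that the row chosen at step $k-1$ is excluded from the feasible set at step $k$, so that its squared norm can be subtracted from $\|K+\tau I_m\|_{F}^{2}$, producing the quantity $\rho=\|K+\tau I_m\|_{F}^{2}-\|(K+\tau I_m)_{(s)}\|_{2}^{2}$ in place of $\nu$, exactly as $\xi$ replaced $\gamma$ in \eqref{3.11}. This yields a second contraction factor with $\rho$, $\widehat{\varepsilon}_{k-1}$, and $\breve{\varepsilon}_{k-1}$. Composing the two conditional-expectation inequalities via the tower property gives the product bound $\chi$ stated in the theorem.

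The main obstacle, in my view, is the rigorous justification of the Chebyshev-type approximations \eqref{3.41} and \eqref{3.43} in the ridge-regression setting: one needs the rows of $K+\tau I_m$ and the entries of the current residual $\mathbf{b}-(K+\tau I_m)\mathbf{x}_k$ to satisfy the bounded-variance hypothesis of Theorem~\ref{Thm3.3} uniformly in $k$, and one must argue that the exclusion of a single index from a sum of size $m-1$ does not affect the sample/population comparison to leading order. Once those approximations are in place with constants $\widehat{\varepsilon}_{k},\breve{\varepsilon}_{k},\widehat{\varepsilon}_{k-1},\breve{\varepsilon}_{k-1}\ll 1$, the rest of the proof is a mechanical translation of the proof of Theorem~\ref{Thm3.4} with the substitutions $A\leftrightarrow K+\tau I_m$, $\gamma\leftrightarrow\nu$, and $\xi\leftrightarrow\rho$.
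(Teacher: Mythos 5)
Your proposal is correct and matches the paper's intent exactly: the paper omits this proof, stating only that it is "similar to that of Theorem \ref{Thm3.4}," and your argument is precisely that translation, with $A\leftrightarrow K+\tau I_m$, $\gamma\leftrightarrow\nu$, $\xi\leftrightarrow\rho$, and the Chebyshev-type fluctuation factors carried through \eqref{3.30}--\eqref{3.46}. Your closing caveat about the heuristic nature of the approximations \eqref{3.41}--\eqref{3.43} applies equally to the paper's own treatment of Theorem \ref{Thm3.4}, so it is not a gap relative to the paper.
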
}

However, we have to the 2-norms of all the rows of $K+\tau I$ in Algorithm \ref{alg10}. Similarly, one can compute the vector ${\bf y}$ as in \eqref{eq45}, and replace \eqref{eq48} by
\begin{equation}\label{49}
\frac{\Big|{\bf{b}}_{\left ( ik \right )}-\big(A_{(ik)}\cdot A^H+\tau I_{(ik)}\big){\bf x}_k\Big|}{{\bf y}_{(ik)}}=\max\limits_{jk\in\Omega_k}\left \{ \frac{\Big|{\bf{b}}_{\left ( jk \right )}-\big(A_{(jk)}\cdot A^H+\tau I_{(jk)}\big){\bf x}_k\Big|}{{\bf y}_{(jk)}}\right \}.
\end{equation}
in Step 3 of Algorithm \ref{alg10}.

\section{Numerical Experiments}\label{sec5}

In this section, we perform some numerical experiments to show the numerical behavior of our proposed algorithms. All the numerical experiments are obtained from using MATLAB 2018b, on a Hp workstation with 20 cores double Intel(R)Xeon(R) E5-2640 v3 processors, with CPU 2.60 GHz and RAM 256 GB. The operation system is 64-bit Windows 10.

In order to show the efficiency of our proposed algorithms for solving the large-scale linear system \eqref{1.1}, we compare our proposed algorithms Algorithm \ref{alg41} and Algorithm \ref{alg5} with some recently proposed Kaczmarz  algorithms including:\\
$\bullet$ {\bf RK}: The randomized Kaczmarz method proposed in \cite{bibitem8}.\\
$\bullet$ {\bf GRK}: The greedy randomized Kaczmarz method due to Bai and Wu \cite{bibitem3}.\\
$\bullet$ {\bf RGRK}: The relaxed greedy randomized Kaczmarz method \cite{bibitem2}.\\

For the ridge regression problem \eqref{41}, we compare the proposed algorithm Algorithm \ref{alg9} and Algorithm \ref{alg10} with \footnote{ We thank Dr. Yong Liu for providing us MATLAB files of VGRK and AVGRKRP~($\omega$).}:\\
$\bullet$ {\bf VRK}: A variant of randomized Kaczmarz algorithm \cite{Hef}.\\
$\bullet$ {\bf VRGS}: A randomized Gauss-Seidel algorithm \cite{Hef}.\\
$\bullet$ {\bf VGRK}: The variant of greedy randomized Kaczmarz algorithm \cite{bibitem14}. \\
$\bullet$ {\bf AVGRKRP~($\omega$)} : The accelerated variant of greedy randomized Kaczmarz algorithm with relaxation parameter \cite{bibitem14}.\\

As was done in \cite{bibitem1}, we make use of the vector ${\bf x}=[1,1,\ldots ,1]^T$ as the ``exact" solution and set the right-hand-side ${\bf b}=\mathcal{A}{\bf x}$, where $\mathcal{A}=A$ for the large-scale linear system \eqref{1.1} and $AA^H+\tau I_m$ for the ridge regression problem \eqref{42}. The initial vector ${\bf x}_0$ is chosen as the zero vector. The stopping criterion is
$$
err=\frac{\left \| {\bf x}-{\bf x}_k\right \|_{2}^{2}}{\left \| {\bf x}_k \right \|_{2}^{2}}<tol,
$$
where ${\bf x}_k$ is the approximation from the $k$-th iteration and $tol$ is a user-described tolerance. If the number of iteration exceeds 400,000, or the CPU time exceeds 12 hours, we will stop the algorithm and declare it fails to converge.
In all the tables below, we denote by ``IT" the number of iterations, and by ``CPU" the CPU time in seconds. All the experiments are repeated for 5 times, and the iteration numbers as well as the CPU time in seconds, are the mean from the 5 runs.

\begin{table}[H]
\centering
\caption{Section 5.1: Test of the linear systems on the coefficient matrix $A\in \mathbb{R}^{m\times n}$ generated by using the MATLAB build-in function {\tt randn(m,n)}, $tol=1e-6$. The sampling ratio is chosen as $\eta=0.05$ in Algorithm \ref{alg5}.}
{\footnotesize\begin{tabular}{|c|c|c|c|c|c|c|}
\hline
\multicolumn{2}{|c|}{$m\times n$}   & {$1000\times200$} & {$2000\times400$} & {$3000\times600$} & {$4000\times800$} & {$5000\times1000$} \\ \hline
\multirow{2}{*}{RK}   & IT  & 3790     & 7247     & 11488     & 15026     & 19115     \\ \cline{2-7}
                      & CPU & 0.41   & 1.07     & 1.94   & 3.03   & 4.82    \\ \hline
\multirow{2}{*}{GRK}  & IT  & 593     & 1182      & 1694      & 2267      & 2817     \\ \cline{2-7}
                      & CPU & 0.14     & 0.48    & 0.83    & 2.71    & 5.63   \\ \hline
\multirow{2}{*}{RGRK $\left(\theta=0.75\right)$}  & IT  & 542     & 1147      & 1605      & 2123      & 2767     \\ \cline{2-7}
                      & CPU & 0.13     & 0.44    & 0.81    & 2.56    & 5.42   \\ \hline
\multirow{2}{*}{RGRK $\left(\theta=1\right)$}  & IT  & 511     & 1122     & 1509     & 1994      & 2724      \\ \cline{2-7}
                      & CPU & 0.11     & 0.43    & 0.77    & 2.51   & 5.37   \\ \hline
\multirow{2}{*}{\bf Algorithm 4} & IT  & 511     & 1122     & 1509      & 1994      & 2724      \\ \cline{2-7}
                      & CPU & {\bf0.06}   & {\bf0.21}    &  {\bf0.45}  & 1.76    &  4.16   \\ \hline
\multirow{2}{*}{\bf Algorithm 5} & IT  & 676    & 1253      & 1840     & 2596      & 3248      \\ \cline{2-7}
                      & CPU & 0.15   & 0.26    & {0.51}    &  {\bf0.71}   & {\bf 2.36}   \\ \hline
\end{tabular}}\label{T533}
\end{table}

\begin{table}[h]
\centering
\caption{Section 5.1: Test of the linear systems on the coefficient matrix $A$ generated by using the MATLAB build-in function {\tt randn(m,n)}, $tol=1e-6$. The sampling ratio is chosen as $\eta=0.001$ in Algorithm \ref{alg5}.
}
{\footnotesize\begin{tabular}{|c|c|c|c|c|c|c|}
\hline
\multicolumn{2}{|c|}{$m\times n$}   & {$300000\times50$} & {$300000\times100$} & {$300000\times500$} & {$300000\times1000$} & {$300000\times5000$} \\ \hline
\multirow{2}{*}{RK}   & IT  & 697     & 1420     & 6881     & 13790    & 69931     \\ \cline{2-7}
                      & CPU & 8.42    & 15.72     & 74.88   & 149.70    & 891.70    \\ \hline
\multirow{2}{*}{GRK}  & IT  & 63     & 174      & 874      & 1007      & 5620     \\ \cline{2-7}
                      & CPU & 1.68     & 2.16    & 25.61    & 84.99    & 2600.35   \\ \hline
\multirow{2}{*}{RGRK $\left(\theta=0.75\right)$}  & IT  & 49     & 152      & 759     & 835      & 5198     \\ \cline{2-7}
                      & CPU & 1.00     & 2.77    & 21.25    & 69.47    & 2250.47   \\ \hline
\multirow{2}{*}{RGRK $\left(\theta=1\right)$}  & IT  & 32    & 116     & 748      & 638      & 4901     \\ \cline{2-7}
                      & CPU & 0.83     & 2.10    & 20.11    & 61.34    & 2044.13   \\ \hline
\multirow{2}{*}{\bf Algorithm 4} & IT  & 32    & 116     & 748      & 638      & 4901     \\ \cline{2-7}
                      & CPU & 0.31   & 0.78    & 16.63   & 50.70    & 2014.85    \\ \hline
\multirow{2}{*}{\bf Algorithm 5} & IT  & 51     & 178      & 823     & 1411      & 7396      \\ \cline{2-7}
                      & CPU & \bf{0.25}   & \bf{0.23}    & \bf{1.97}    & \bf{11.48}    & \bf{394.7}   \\ \hline
\end{tabular}}\label{T54}
\end{table}

\subsection{Numerical Experiments on Large Linear Systems with Synthetic Data}

In this subsection, we use some synthetic data generated randomly by using the MATLAB build-in function {\tt randn} for the large linear systems \eqref{1.1}. In the first example, the test matrices are generated by
{\tt A=randn(5n,n)}, with $n=200,400,600,800$ and 1000, respectively; see Table \ref{T533}. In the second example, the test matrices are generated by
{\tt A=randn(300000,n)}, with $n=50,100,500,1000$ and 5000, respectively; see Table \ref{T54}.
In this subsection, we run the algorithms RK, GRK, RGRK, Algorithm \ref{alg4}, Algorithm \ref{alg41}, and Algorithm \ref{alg5}, with convergence tolerance $tol=10^{-6}$.
Specifically, we run the relaxed greedy randomized Kaczmarz method (RGRK) with both $\theta=0.75$ (the one used in \cite{bibitem2}) and $\theta=1$ (the theoretically optimal parameter). Tables \ref{T533}--\ref{T54} list the numerical results.

From Tables \ref{T533}--\ref{T54}, it is seen that both GRK and RGRK use much fewer iterations than the RK method, and require less CPU time than the RK method in the most cases, except for the number of columns is large, say, $n=5000$. This is because one has to to determine some index sets such as \eqref{2.5} in GRK and RGRK, whose workload is large, especially for big data problems. This is also the reason why Algorithm \ref{alg41} outperforms the relaxed greedy randomized Kaczmarz method  with the relaxation parameter $\theta=1$, even if they are mathematically equivalent; see Remark 3.1.
As a comparison, our three new algorithms use comparable iteration numbers to GRK and RGRK, and Algorithm \ref{alg41} converges faster than RK, GRK, RGRK, and Algorithm \ref{alg4}, while Algorithm \ref{alg5} performs the best in terms of CPU time.
Notice that there is no need to calculate probabilities in the three proposed algorithms.

Indeed, the cost of random selection according to probability will be high when the matrix in question is very large.
Fortunately, in Algorithm \ref{alg5}, we only use a few rows of $A$ for the working rows, and thus the computational overhead
in each iteration of Algorithm \ref{alg5} can be much less than those of the others. So we benefit from this strategy significantly, which speeds up the calculation.

%

\subsection{Numerical Experiments on Large Linear Systems with Real Data}

In this section, we run our proposed algorithms on some real data. The large sparse data matrices are from the University of Florida Sparse
Matrix Collection \footnote{https://sparse.tamu.edu/}, and the large dense matrixes are from the YouTube Faces data set\footnote{http://www.cs.tau.ac.il/~wolf/ytfaces/}, the StarPlus fMRI data set\footnote{http://www.cs.cmu.edu/afs/cs.cmu.edu/project/theo-81/www/}, as well as the Flint data set \footnote{https://www.jianshu.com/p/5fde55a4d267?tdsourcetag=s\_pcqq\_aiomsg}.
The details of these data matrices are given in Table \ref{T51}, where the matrix $201912\_75N060W\_AVE\_5\times 1$ is obtained from extending the width of the original picture to five times via the MATLAB function {\tt imresize}, and the matrices $bibd\_17\_8^T$ and $mri2^T$ are transpose of matrices $bibd\_17\_8$ and $mri2$, respectively.
We run RK, GRK, RGRK, Algorithm \ref{alg4}, Algorithm \ref{alg41}, and Algorithm \ref{alg5} on these problems, and the convergence tolerance is chosen as $tol=10^{-3}$. In Table \ref{TT54}, we present the numerical results performed on sparse matrices from the University of Florida Sparse Matrix Collection, and the sampling ratio is chosen as $\eta=0.01$ in Algorithm \ref{alg5}. In Table \ref{TT55}, we list on the numerical results on the dense matrices from Luminous remote sensing, YouTube Faces data set, and StarPlus fMRI data set, where the sampling ratio is chosen as $\eta=0.001$ in Algorithm \ref{alg5}. Here ``/" implies the number of iterations exceeds 400000 or the CPU time exceeds 12 hours.

\begin{table}[H]
\caption{Test matrices used in Section 5.2 for solving large linear systems, where ``$^T$" denotes transpose of the matrix.}
\centering
{\footnotesize\begin{tabular}{|c|c|c|c|}
  \hline
  Matrix & size ($m\times n$) & nnz & Background \\\hline
  $abtaha1$ & {$14596\times209$}& 51307 & Combinatorial Problem \\
  $abtaha2$ & {$37932\times331$} & 137228 & Combinatorial Problem \\
  $bibd\_17\_8^T$ & {$24310\times136$} & 680680 & Combinatorial Problem \\
  $sls$ & {1748122$\times62729$} & 6804304 & Least Squares Problem \\
  $airfoil1\_dual$ & {$8034\times8034$} & 23626 & 2D/3D Problem \\
  $mri2^T $&$147456\times63240$ & 596160 & MRI Problem \\\hline
  $YouTubeFaceArrange\_64\times64 $& $370319\times4096$ & full & Face data \\
  $YouTubeFaceArrange\_128\times128 $& $370319\times16384$ & full & Face data \\
  $201912\_75N060W\_AVE\_5\times 1$ & $90000\times28800$ & full & Luminous remote sensing \\
  \hline
\end{tabular}}\label{T51}
\end{table}

Again, we see from Tables \ref{TT54}--\ref{TT55} that Algorithm \ref{alg41} and Algorithm \ref{alg5} outperform the other algorithms, and their CPU time is much less than those of the GRK method and the RGRK method.
Specifically, our new algorithms may converge faster than GRK and RGRK even if they share about the same iteration numbers. For instance, for the {\it airfoil1\_dual} matrix, we observe from Table \ref{TT54} that both RGRK $\left(\theta=1\right)$ and Algorithm \ref{alg41} use about the same iterations, while the latter is about two times faster than the former. Indeed, the RGRK method with $\theta=1$ is mathematically equivalent to Algorithm \ref{41}, while the proposed algorithms are much cheaper (per iteration) than GRK and RGRK.
More precisely, it was stressed that $\theta=1$ may not be a good choice \cite{bibitem2}. However, we see from the tables that RGRK with $\theta=1$ works better than GRK in most cases, and this is consistent with our analysis given in Section \ref{Sec3}. As there is no need to form the greedy index set, nor to calculate the probability for choosing working rows, Algorithm \ref{alg41} is superior to the RGRK method in terms of CPU time.


It is seen from Table \ref{TT55} that the RK, GRK, and RGRK methods do not work for many problems. This is because they cannot calculate the probabilities accurately during iterations, which suffer from rounding errors. Even if one can correct the probabilities so that the algorithms keep working, the additional overhead will be large. Notice that Algorithm \ref{alg5} requires only a small part of the matrix in each iteration, while the RK, GRK and RGRK methods have to scan all the rows for calculation. Consequently,  Algorithm \ref{alg5} is often superior to the others, and has more suitable to large-scale and dense problems.

\begin{table}[h]
\centering
\caption{Section 5.2: Test of the linear systems on the sparse matrices from the University of Florida Sparse Matrix Collection, $tol=1e-3$. The sampling ratio is chosen as $\eta=0.01$ in Algorithm \ref{alg5}. Here ``/" means the number of iterations exceeds 400000 or the CPU time exceeds 12 hours.}
{\footnotesize\begin{tabular}{|c|c|c|c|c|c|c|}
\hline
\multicolumn{2}{|c|}{Matrix\& Size}   & \makecell[c]{$abtaha1$ \\{$14596\times209$}} & \makecell[c]{$abtaha2$ \\{$37932\times331$}} & \makecell[c]{$sls$\\{1748122$\times62729$}} & \makecell[c]{$bibd\_17\_8^T$\\$24310\times136$}& \makecell[c]{$airfoil1\_dual$\\$8034\times8034$} \\ \hline
\multirow{2}{*}{RK}   & IT  & 35120     & 57756     & /     & 1449    & 113026     \\ \cline{2-7}
                      & CPU & 1204.33    & 4890.46     & /   & 82.53    & 2142.28   \\ \hline
\multirow{2}{*}{GRK}  & IT  & 486     & 617      & 74334\      & 141     &57901      \\ \cline{2-7}
                      & CPU & 0.83    & 2.96    & 13299.49    & 0.63    & 38.06   \\ \hline
\multirow{2}{*}{RGRK $\left(\theta=0.75\right)$}  & IT  & 486     & 617      & 74334\      & 141     &57901      \\ \cline{2-7}
                      & CPU & 0.83    & 2.96    & 13297.21    & 0.63    & 38.06   \\ \hline
\multirow{2}{*}{RGRK $\left(\theta=1\right)$} & IT  & 210    & 267      & 62737\      & 108     & 60316       \\ \cline{2-7}
                      & CPU & 0.36   & 1.37    & 12449.93    & 0.39    &40.63   \\ \hline
\multirow{2}{*}{\bf Algorithm 4} & IT  & 210    & 267      & 62737\      & 108     & 60316       \\ \cline{2-7}
                      & CPU & \bf{0.07}   & {\bf 0.21}    & 9727.34\    & {\bf 0.28}   &{22.79}     \\ \hline
\multirow{2}{*}{\bf Algorithm 5} & IT  & 645     & 677      & 80639      &192      &55943\      \\ \cline{2-7}
                      & CPU & 0.21   & { 0.48}    & {\bf 4391.92}   & {0.39}   & {\bf 22.15}\    \\ \hline
\end{tabular}}\label{TT54}
\end{table}

\begin{table}[h]
\centering
\caption{Section 5.2: Test of the linear systems on the dense matrices from Luminous remote sensing, YouTube Faces data set, and StarPlus fMRI data set, $tol=1e-3$. The sampling ratio is chosen as $\eta=0.001$ in Algorithm \ref{alg5}. Here ``/" means the number of iterations exceeds 400000 or the CPU time exceeds 12 hours.}
{\footnotesize\begin{tabular}{|c|c|c|c|c|c|}
\hline
\multicolumn{2}{|c|}{Matrix\& Size} & \makecell[c]{\scriptsize{$201912\_75N060W\_AVE\_5\times 1$}\\$90000\times28800$} &\makecell[c]{\scriptsize{$YouTubeFaceArrange\_64\times64$}\\$370319\times4096$}  &\makecell[c]{\scriptsize{$YouTubeFaceArrange\_128\times128$}\\$370319\times16384$}  &\makecell[c]{\scriptsize{$mri2^T$}\\$147456\times63240$}  \\
\hline
\multirow{2}{*}{RK}  &  IT  &/  &319191  &/  &/  \\ \cline{2-6}
 &CPU  &/  &3971.63  &/  &/  \\ \hline
\multirow{2}{*}{GRK}  &  IT  &/  &1009  &1497  &265586  \\ \cline{2-6}
 &CPU  &/  &504.91  &2109.61  &4673.75  \\ \hline
\multirow{2}{*}{RGRK $\left(\theta=0.75\right)$}   &IT   &/  &862  &1288  & 263310 \\ \cline{2-6}
 &CPU  &/  &477.25  &1793.57  &4660.49  \\ \hline
\multirow{2}{*}{RGRK $\left(\theta=1\right)$}   &IT   &/  &341  &552  &284843 \\ \cline{2-6}
 &CPU  &/  &225.79  &1142.20  &4932.33  \\ \hline
\multirow{2}{*}{\bf Algorithm 4}  &IT  &34944  &341  &552  &284843  \\ \cline{2-6}
 &CPU  &8793.66  &166.2  &756.19  &  1883.25\\ \hline
\multirow{2}{*}{\bf Algorithm 5}  &IT  &90157  &4836  &4976  &239295  \\ \cline{2-6}
 &CPU  &{\bf5795.28}  &{\bf124.31}  &{\bf415.70}  &{\bf1497.94}  \\ \hline
\end{tabular}}\label{TT55}
\end{table}

\begin{table}[H]
\caption{Test matrices used in Section 5.3 for ridge regression problems}
\centering
\begin{tabular}{|c|c|c|c|}
  \hline
  Matrix & Size ($m\times n$) & nnz & Background \\\hline
  $ch7-8-b2$ & {$11760\times1760$}& 35280 & Combinatorial Problem \\
  $ch7-9-b2$& {$17640\times1512$} & 52920 & Combinatorial Problem \\
  $ch8-8-b2$ & {$18816\times1568$} & 56448 & Combinatorial Problem \\
  $ch6-6-b2$ & {$2400\times450$}& 7200 & Combinatorial Problem \\
  $bcsstm09$ & {$1083\times1083$} & 1083 & Structural Problem \\
  \hline
\end{tabular}\label{T56}
\end{table}

\subsection{Numerical Experiments on Ridge Regression Problems}

In this example, we consider the ridge regression problem \eqref{42}. The test matrices are listed in Table \ref{T56}, which are from the University of Florida Sparse
Matrix Collection \footnote{https://sparse.tamu.edu/}. We run the
VRK method \cite{Hef}, the VRGS method \cite{Hef}, the VGRK method \cite{bibitem14}, the AVGRK~($\omega$) method \cite{bibitem14}, as well as Algorithm \ref{alg9} and Algorithm \ref{alg10} on this problem. Notice that both VGRK and AVGRK~($\omega$) need to for the matrix $K$ explicitly, refer to \eqref{42}, so the CPU time of these two algorithms include both that for forming $K$ and for solving \eqref{42} iteratively.

In all the algorithms, the convergence tolerance is chosen as  $tol=10^{-3}$, and the regularization parameters are chosen as $\tau=0.1,0.01$ and 0.001, respectively. The sampling ratio is chosen as $\eta=0.01$ in Algorithm \ref{alg10}. As was done in \cite{bibitem14}, we pick the relaxation parameter $\omega=1+\frac{n}{m}$ in AVGRK~($\omega$).
If the number of iterations of an algorithm exceeds 400000, or the CPU time is over 12 hours, we declare that the algorithm fails to converge. Tables \ref{T57}--\ref{T59} present the numerical results.


Some remarks are in order. First,
we observe from Tables \ref{T57}--\ref{T59} that, all the algorithms VRK, VRGS, VRGK and AVGRK~($\omega$) do not work for these problems in most cases. As a comparison, Algorithm \ref{alg9} and Algorithm \ref{alg10} run quite well. These show the superiority of our proposed algorithms over many state-of-the-art algorithms for ridge regression problems. Second, we see that AVGRK~($\omega$) perform better than VRK, VRGS and VRGK when $\tau=0.1$. However, AVGRK~($\omega$) uses much more iterations and CPU time than Algorithm \ref{alg9} and Algorithm \ref{alg10}.
For the {\tt bcsstm09} matrix, it is seen that AVGRK~($\omega$) only requires 8 iterations and Algorithm \ref{alg9} needs 25301 iterations, while the CPU time of the two algorithms are comparable. Indeed, AVGRK~($\omega$) tries to use all the information contained in the indicator set, so the number of iterations of AVGRK~($\omega$) can be small. However, the costs of AVGRK~($\omega$) in each iteration is much larger than those of Algorithm \ref{alg9} and Algorithm \ref{alg10}.
Third, unlike AVGRK~($\omega$), we see that Algorithm \ref{alg9} and Algorithm \ref{alg10} are insensitive to the regularization parameter $\tau$ used.
Recall that there is no need to form and store $K$ explicitly in our two proposed algorithms, moreover, they are free of relaxation parameters. Consequently, our new algorithms are competitive candidates for ridge regression, especially for large-scale problems.

\begin{table}[h]
\centering
\caption{Section 5.3: Test of the regression problem on the matrices from the University of Florida Sparse Matrix Collection, $tol=1e-3$ and $\tau=0.1$. The sampling ratio is chosen as $\eta=0.01$ in Algorithm \ref{alg10}, and the relaxation parameter $\omega=1+\frac{n}{m}$ in AVGRK~($\omega$).}
{\footnotesize\begin{tabular}{|c|c|c|c|c|c|c|}
\hline
\multicolumn{2}{|c|}{$m\times n,~\tau=0.1$}   & \makecell[c]{$ch7-8-b2$\\$11760\times1760$} & \makecell[c]{$ch7-9-b2$\\$17640\times1512$} & \makecell[c]{$ch8-8-b2$\\$18816\times1568$} & \makecell[c]{$ch6-6-b2$\\$2400\times450$} & \makecell[c]{$bcsstm09$\\$1083\times1083$} \\ \hline
\multirow{2}{*}{VRK}  & IT  & /    & /      & /    & /      & 6751    \\ \cline{2-7}
                      & CPU &/ &/    &/   & /    & 5.93   \\ \hline
\multirow{2}{*}{VGRK}  & IT  & /    & /      & /    & /      & 1130    \\ \cline{2-7}
                      & CPU &/ &/    &/   & /    & 1.77   \\ \hline
\multirow{2}{*}{VRGS}  & IT  & /    & /      & /    & /      & 7285    \\ \cline{2-7}
                      & CPU &/ &/    &/   & /    & 3.64   \\ \hline
\multirow{2}{*}{AVGRKRP~($\omega$)}  & IT  & 11051    & 13486      & 14107    & 4937      & 8     \\ \cline{2-7}
                      & CPU &871.81 &1940.11    &2015.63    & 38.03    & {\bf0.18}   \\ \hline
\multirow{2}{*}{\bf Algorithm 6} & IT  & 1314     & 1676     & 1739      & 557      & 25301     \\ \cline{2-7}
                      & CPU & 1.91   & {\bf3.42}    & 3.66  & 3.79    & 0.19    \\ \hline
\multirow{2}{*}{\bf Algorithm 7} & IT  & 1588     & 1682      & 1649     & 604      & 29431     \\ \cline{2-7}
                      & CPU & {\bf1.65}   & 3.58    & {\bf3.15}    & {\bf0.18}    & 2.01   \\ \hline
\end{tabular}}\label{T57}
\end{table}

\begin{table}[H]
\centering
\caption{Test of the regression problem on the matrices from the University of Florida Sparse Matrix Collection, $tol=1e-3$ and $\tau=0.01$. The sampling ratio is chosen as $\eta=0.01$ in Algorithm \ref{alg10}, and the relaxation parameter $\omega=1+\frac{n}{m}$ in AVGRK~($\omega$).}
{\footnotesize\begin{tabular}{|c|c|c|c|c|c|c|}
\hline
\multicolumn{2}{|c|}{$m\times n, \tau=0.01$}   & \makecell[c]{$ch7-8-b2$\\$11760\times1760$} & \makecell[c]{$ch7-9-b2$\\$17640\times1512$} & \makecell[c]{$ch8-8-b2$\\$18, 816\times1568$} & \makecell[c]{$ch6-6-b2$\\$2400\times450$} & \makecell[c]{$bcsstm09$\\$1083\times1083$} \\ \hline
\multirow{2}{*}{VRK}  & IT  & /    & /      & /    & /      & 7933    \\ \cline{2-7}
                      & CPU &/ &/    &/   & /    & 8.31   \\ \hline
\multirow{2}{*}{VGRK}  & IT  & /    & /      & /    & /      & 1084    \\ \cline{2-7}
                      & CPU &/ &/    &/   & /    & 1.71   \\ \hline
\multirow{2}{*}{VRGS}  & IT  & /    & /      & /    & /      & 7180    \\ \cline{2-7}
                      & CPU &/ &/    &/   & /    & 3.59   \\ \hline
\multirow{2}{*}{AVGRKRP~($\omega$)}  & IT  & 104573    & /      & /    & 48936     & 8     \\ \cline{2-7}
                      & CPU & 7794.59    &/    &/    & 397.24    & {\bf0.18}   \\ \hline
\multirow{2}{*}{\bf Algorithm 6} & IT  & 1314     & 1676     & 1739      & 557      & 25301     \\ \cline{2-7}
                      & CPU & {\bf 1.87}   & {\bf3.21}    & {\bf3.53}  & {\bf0.18}    & 0.86    \\ \hline
\multirow{2}{*}{\bf Algorithm 7} & IT  & 1513     & 1712      & 1839     & 579      & 30142      \\ \cline{2-7}
                      & CPU & 2.14   & 3.46    & 3.96    & 0.22    & 1.91   \\ \hline
\end{tabular}}\label{T58}
\end{table}

\begin{table}[H]
\centering
\caption{Test of the regression problem on the matrices from the University of Florida Sparse Matrix Collection, $tol=1e-3$ and $\tau=0.001$. The sampling ratio is chosen as $\eta=0.01$ in Algorithm \ref{alg10}, and the relaxation parameter $\omega=1+\frac{n}{m}$ in AVGRK~($\omega$).}
{\footnotesize\begin{tabular}{|c|c|c|c|c|c|c|}
\hline
\multicolumn{2}{|c|}{$m\times n, \tau=0.001$}   & \makecell[c]{$ch7-8-b2$\\$11760\times1760$} & \makecell[c]{$ch7-9-b2$\\$17640\times1512$} & \makecell[c]{$ch8-8-b2$\\$18816\times1568$} & \makecell[c]{$ch6-6-b2$\\$2400\times450$} & \makecell[c]{$bcsstm09$\\$1083\times1083$} \\ \hline
\multirow{2}{*}{VRK}  & IT  & /    & /      & /    & /      & 8306    \\ \cline{2-7}
                      & CPU &/ &/    &/   & /    & 8.97   \\ \hline
\multirow{2}{*}{VGRK}  & IT  & /    & /      & /    & /      & 1079    \\ \cline{2-7}
                      & CPU &/ &/    &/   & /    & 1.84   \\ \hline
\multirow{2}{*}{VRGS}  & IT  & /    & /      & /    & /      & 8210    \\ \cline{2-7}
                      & CPU &/ &/    &/   & /    & 4.83   \\ \hline
\multirow{2}{*}{AVGRKRP~($\omega$)}  & IT  & /    & /      & /    & /      & 9     \\ \cline{2-7}
                      & CPU & /    &/    &/    & /    & {\bf 0.19}   \\ \hline
\multirow{2}{*}{\bf Algorithm 6} & IT  & 1314     & 1676     & 1739      & 557      & 1083     \\ \cline{2-7}
                      & CPU & {\bf1.90}  & {\bf3.23}    & 3.44  & {\bf0.19}    & 0.21    \\ \hline
\multirow{2}{*}{\bf Algorithm 7} & IT  & 1308     & 1851      & 1489     & 669      & 29987      \\ \cline{2-7}
                      & CPU & 2.21   & 3.37    & {\bf3.41} & 0.67    & 2.02  \\ \hline
\end{tabular}}\label{T59}
\end{table}

\section{Concluding Remarks}
Kaczmarz method is an effectively iterative method for large linear systems. The key of this method is to introduce a practical and suitable probability criterion for selecting working rows from the coefficient matrix. In this paper, we propose a new probability criterion which can capture as large items as possible in the homogenized residual of linear systems in each iteration, and accelerate the algorithm by increasing the probability saliency and random sampling. This method converges faster than RK method both in theory and in practice, and it often converges much faster than the GRK method for large-scale problems.

First, from the probability significance point of view, we present a partially randomized Kaczmarz method, which can reduce the computational overhead needed in greedy randomized Kaczmarz method.
Second, based on Chebyshev's law of large numbers and Z-test, we apply a simple sampling approach to the partially randomized Kaczmarz method, and propose a randomized Kaczmarz method with simple random sampling for large linear systems. The convergence of the proposed method is established.


\begin{thebibliography}{99}

\bibitem{Ans}{\sc R. Ansorge}, {\em Connections between the Cimmino-method and the Kaczmarz-method for the solution of singular and regular systems of equations}, Computing, 33(1984), pp.  367--375.

\bibitem{bibitem1}	{\sc Z.Z. Bai, W. Wu}, {\em On convergence rate of the randomized Kaczmarz method}, Linear Algebra and Its Applications, 553 (2018), pp.  252--269.

\bibitem{bibitem2}	{\sc Z.Z.  Bai, W. Wu}, {\em On relaxed greedy randomized Kaczmarz methods for solving
large sparse linear systems}, Applied Mathematics Letters, 83(2018), pp.  21--26.

\bibitem{bibitem3}	{\sc Z.Z.  Bai, W. Wu}, {\em On greedy randomized Kaczmarz method for solving large sparse linear systems}, SIAM Journal on Scientific Computing, 40 (2018), pp.  A592--A606.

\bibitem{bibitem9}	{\sc V. Borkar, N. Karamchandani, S. Mirani}, {\em Randomized Kaczmarz for rank aggregation from pairwise comparisons}, 2016 IEEE Information Theory Workshop (ITW), Cambridge, (2016) pp.  389--393.

\bibitem{bibitem13} {\sc  M. Carlton}, {\em Probability and Statistics for Computer Scientists}, The American Statian, 62(2008), pp.  271-272.

\bibitem{Car} {\sc  D. Carmona-Ballester, J. M. Trujillo-Sevilla, Bonaque-Gonz$\acute{a}$lez, Sergio, et al}, {\em Weighted nonnegative tensor factorization for atmospheric tomography reconstruction}, Astronomy \& Astrophysics, 614(2018)(A41).

\bibitem{Cen} {\sc Y. Censor}, {\em  Row-action methods for huge and sparse systems and their applications}, SIAM review, 23(1981), pp.  444-466.


\bibitem{Du} {\sc K. Du}, {\em Tight upper bounds for the convergence of the randomized
extended Kaczmarz and Gauss–Seidel algorithms}, Numerical Linear Algebra with Application, 2019, e2233.

\bibitem{bibitem20}{\sc R. Gordon, R. Bender,G. T. Herman}, {\em Algebraic Reconstruction Techniques (ART) for three-dimensional electron microscopy and X-ray photography}, Journal of theoretical Biology, 29(1970), pp. 471-481.

\bibitem{Gow} {\sc R. Gower and P. Richt\'{a}rik}, {\em Stochastic dual ascent for solving linear systems}, arXiv:1512. 06890, 2015.

\bibitem{bibitem14}  {\sc C. Gu, Y. Liu} {\em Variant of greedy randomized Kaczmarz for ridge regression}, Applied Numerical Mathematics, 143(2019), pp.  223--246.

\bibitem{Gua} {\sc H. Guan, R. Gordon}, {\em A projection access order for speedy convergence of ART (algebraic reconstruction technique), pp.  a multilevel scheme for computed tomography}, Physics in Medicine \& Biology, 39(1994), pp.  2005--2022.

\bibitem{bibitem10} {\sc W. Guo, H. Chen, W. Geng, et al}, {\em A Modified Kaczmarz Algorithm for Computerized Tomographic Image Reconstruction}, In 2009 2nd International Conference on Biomedical Engineering and Informatics IEEE, (2009),pp.  1--4.

\bibitem{Had} {\sc A. Hadgu}, {\em An application of ridge regression analysis in the study of syphilis data}, Statistics in Medicine, 3. 3(1984), pp.  293--299.

\bibitem{bibitem22} {\sc H. Hardy, J. Littlewood, and G. PSlya},  {\em Inequality},  Bull.  Amer.  Math.  Soc., 1952, 293--302.

\bibitem{Haw} {\sc D. M. Hawkins,X. Yin}, {\em A faster algorithm for ridge regression of reduced rank data}, Computational Statistics \& Data Analysis,  40(2002), pp.  253--262.

\bibitem{Hef} {\sc A. Hefny, D. Needell, and A. Ramdas}, {\em Rows versus Columns: Randomized Kaczmarz or Gauss-Seidel for Ridge Regression}, SIAM Journal on Scientific Computing, 39 (2017), pp. S528--S542.

\bibitem{Int} {\sc X. Intes, V. Ntziachristos, J. P. Culver, et al}, {\em Projection access order in algebraic reconstruction technique for diffuse optical tomography}, Physics in Medicine \& Biology, 47(2002) , pp.  N1--N10.

\bibitem{bibitem16}  {\sc A. A. Ivanov, A. I. Zhdanov}, {\em Kaczmarz algorithm for Tikhonov regularization problem}, Applied Mathematics E-Notes,  13(2013), pp.  270--276.

\bibitem{bibitem12} {\sc S. Kaczmarz}, {\em Approximate solution of systems of linear equations}, International Journal of Control, 35(1937), pp.  355--357.

\bibitem{Bou} {\sc B. Karl, C. Bj${\ddot{o}}$rn, L. Mats, et al}, {\em Characterization of photon-counting multislit breast tomosynthesis}, Medical Physics, 45(2018), pp.  549--560.

\bibitem{bibitem19} {\sc S. Lee, H. J. Kim}, {\em Noise properties of reconstructed images in a kilo-voltage on-board imaging system with iterative reconstruction techniques: A phantom study}, Physica Medica, 30(2014), pp.  365--373.

\bibitem{Lei}{\sc Y, Lei, D. X. Zhou}, {\em Learning theory of randomized sparse Kaczmarz method}, SIAM Journal on Imaging Sciences, 11(2018), pp.  547--574.

\bibitem{bibitem21} {\sc S. Li, M. Jackowski, D. P. Dione, et al}, {\em Refraction corrected transmission ultrasound computed tomography for application in breast imaging}, Medical Physics,  37(2010), pp.  2233--2246.

\bibitem{Li} {\sc T. Li, D. Isaacson, J. C. Newell, et al}, {\em Adaptive techniques in electrical impedance tomography reconstruction}, Physiological Measurement,35(2014), pp.  1111--1124.

\bibitem{Liu} {\sc J. Liu and S. Wright},  {\em An accelerated randomized Kaczmarz algorithm}, Mathematics of Computation, 85 (2016), pp. 153--178.

\bibitem{Loe}  {\sc J. Loera, J. Haddock, AND D. Needell}, {\em A sampling Kaczmarz-Motzkin algorithm for linear feasibility}, SIAM Journal on Scientific Computing, 39 (2017), pp. S66--S87.

\bibitem{Nec} {\sc I. Necoara},  {\em Faster randomized block Kaczmarz algorithms}, SIAM Journal on Matrix Analysis and Applications 40 (2019), pp.  1425--1452

\bibitem{Ma} {\sc A. Ma, D. Needell, A. Ramdas}, {\em Convergence properties of the randomized extended Gauss--Seidel and Kaczmarz methods}, SIAM Journal on Matrix Analysis and Applications,  36 (2015), pp.  1590--1604.

\bibitem{bibitem5}	{\sc D. Needell, S. Deanna, W. Srebro, et al}, {\em Stochastic gradient descent, weighted sampling, and the randomized Kaczmarz algorithm}, Mathematical Programming, 155 (2016), pp.  549--573.


\bibitem{bibitem6}	{\sc D. Needell, J. A. Tropp}, {\em Paved with good intentions: analysis of a randomized block Kaczmarz method}, Linear Algebra and its Applications,  441(2014), pp.  199--221.

\bibitem{bibitem7}	{\sc D. Needell, R. Zhao, A. Zouzias}, {\em Randomized block Kaczmarz method with projection for solving least squares}, Linear Algebra and its Applications, 484(2015), pp.  322--343.

\bibitem{bibitem23} {\sc J. Nutini, B. Sepehry, I. Laradji, et al}, {\em Convergence rates for greedy Kaczmarz algorithms, and faster randomized Kaczmarz rules using the orthogonality graph}, arXiv preprint arXiv:1612. 07838, 2016.

\bibitem{Ram}{\sc R. Ramlau, M. Rosensteiner}, {\em An efficient solution to the atmospheric turbulence tomography problem using Kaczmarz iteration}, Inverse Problems, 28(2012), pp.  095004-1--095004-23.

\bibitem{bibitem8} {\sc T. Strohmer, R. Vershynin}, {\em A randomized Kaczmarz algorithm with exponential convergence}, Journal of Fourier Analysis and Applications,  15(2009), pp.  262--278.

\bibitem{Tho}{\sc G. Thoppe, V. S. Borkar, D. Manjunath}, {\em A stochastic Kaczmarz algorithm for network tomography}, Automatica, 50 (2014), pp.  910--914.

\bibitem{Tik} {\sc A. N. Tikhonov}, {\em On the Stability of Inverse Problems (in Russian)}, Dolk.  Akad.  Nauk SSSR, 39(1943), pp.  176--179.

\bibitem{Tik2} {\sc A. N. Tikhonov, V. Y. Arsenin },  {\em Solutions of Ill-Posed Problems}, Mathematics Of Computation 32(1977), pp. 491-491.

\bibitem{Vog} {\sc J. Vogelgesang, C. Schorr},  {\em A semi-discrete Landweber–Kaczmarz method for cone beam tomography and laminography exploiting geometric prior information}, Sensing \& Imaging, 17(2016), pp.  17. 1-17. 20.

\bibitem{bibitem11} {\sc C. Wang, A. Ameya, Y. M. Lu}, {\em Randomized Kaczmarz algorithm for inconsistent linear systems: an exact MSE analysis}, International Conference on Sampling Theory and Applications, Washington DC, (2015), pp.  498--502.

\bibitem{Xu} {\sc X. L. Xu, J. S. Liow, \& S. C. Strother}, {\em Iterative algebraic reconstruction algorithms for emission computed tomography: A unified framework and its application to positron emission tomography}, Medical Physics, 20(1993), pp.  1675--1684.

\bibitem{bibitem18} {\sc J. J. Zhang}, {\em A new greedy Kaczmarz algorithm for the solution of very large linear systems}, Applied Mathematics Letters, 91 (2019), pp.  207--212.

\bibitem{Zhang} {\sc T. Zhang}, {\em On the dual formulation of regularized linear systems with convex risks}, Machine Learning, 46(2002), pp. 91--129.

\bibitem{Zou} {\sc A. Zouzias, N. Freris}, {\em Randomized extended Kaczmarz for solving least squares}, SIAM Journal on Matrix Analysis and Applications, 34 (2013), pp. 773--793.

\end{thebibliography}
\end{document}